\documentclass[letterpaper]{article}

\usepackage[onehalfspacing]{setspace}
\usepackage[margin=2.5cm]{geometry}

\usepackage[utf8]{inputenc}
\usepackage[T1]{fontenc}
\usepackage{amssymb}
\usepackage{amsmath}
\usepackage{amsthm}
\usepackage{array}
\usepackage{tabularx}
\usepackage{float}
\usepackage{caption}
\usepackage{graphicx}
\usepackage{subcaption}
\usepackage{printlen}
\usepackage{booktabs}
\usepackage{longtable}
\usepackage{siunitx}
\usepackage[linesnumbered,ruled,vlined]{algorithm2e}
\usepackage{listings}
\usepackage{xcolor}
\usepackage{csquotes}
\usepackage{orcidlink}
\usepackage{authblk}

\usepackage[numbers]{natbib}
\usepackage[short, nocomma]{optidef}

\usepackage{tikz}
\usepackage{standalone}
\usepackage{pgfplots}
\usepgfplotslibrary{groupplots,dateplot}
\pgfplotsset{compat=newest}
\usetikzlibrary{positioning, arrows.meta, shapes.geometric, backgrounds, decorations.pathreplacing, patterns, shapes.arrows, fit}
\usepackage{tikzscale}

\definecolor{custom_blue}{HTML}{0072B2}
\definecolor{custom_magenta}{HTML}{CC79A7}
\definecolor{custom_green}{HTML}{009E73}
\definecolor{custom_orange}{HTML}{E69F00}
\tikzset{mygridline/.style={line width=0.5pt, color=lightgray}}
\tikzset{every picture/.style={font=\footnotesize}}

\usepackage{hyperref}
\usepackage{cleveref}

\newtheorem{theorem}{Theorem}[section]

\newtheorem{proposition}[theorem]{Proposition}
\newtheorem{corollary}[theorem]{Corollary}
\newtheorem{lemma}[theorem]{Lemma}
\newtheorem{definition}[theorem]{Definition}

\newtheorem{observation}[theorem]{Observation}

\begin{document}

\title{Revisiting transportation problems under Monge costs with applications to location problems}

\author[1]{Stefan Nickel \orcidlink{0000-0002-8339-0117}}
\author[2,3]{Justo Puerto \orcidlink{0000-0003-4079-8419}}
\author[1]{Simon Ramoser \orcidlink{0000-0003-0318-4725}\thanks{Corresponding author: simon.ramoser@kit.edu}}
\author[2,3]{Alberto Torrejon \orcidlink{0000-0001-5156-1551}}

\affil[1]{Institute for Operations Research, Karlsruhe Institute of Technology, Karlsruhe, Germany}
\affil[2]{Department of Statistics and Operational Research, University of Seville, Sevilla, Spain}
\affil[3]{Institute of Mathematics, University of Seville, Sevilla, Spain}

\date{}

\maketitle

\begin{abstract}
We investigate the transportation problem under a Monge cost structure and derive compact formulas for optimal dual solutions based on the northwest-corner rule. As an application illustrating how these formulas yield structural insight while enhancing computational performance, we consider a broad class of facility location problems. In particular, the expressions are used within a Benders decomposition framework to derive novel formulations for the Discrete Ordered Median Problem with non-increasing weights. Numerical experiments validate that the resulting formulations achieve state-of-the-art performance and exhibit strong robustness across a wide range of instances.
\end{abstract}

\vspace{0.5em}
\noindent \textbf{Keywords:} Combinatorial optimization, Monge costs, Discrete Ordered Median Problem, Benders decomposition

\section{Introduction}\label{sec:introduction}
The task of finding a cost-minimal transportation plan for carrying a commodity from a set of sources with given supply quantities to a set of destinations with given demands -- commonly referred to as the transportation problem (TP) -- is a classical and well-studied problem in the field of Operations Research. It has numerous applications beyond the optimal distribution of goods, such as production scheduling, supply chain management, or emergency logistics, just to name a few (e.g., cf.~\citep{zhang_review_2025}). In many modern models, the TP appears as a subproblem that must be frequently solved, for instance, within Benders decompositions \cite{fischetti_benders_2016}, bilevel formulations \cite{hillbrecht2025bilevel, singh_bilevel_TP}, or Farkas‑based valid inequality schemes \cite{espejo_single_alloc_hub}. In such settings, explicit knowledge of TP solutions can reveal structural insights about the overarching models and enable more efficient use of advanced optimization techniques. Our approach is to derive compact formulas for optimal dual variables of the TP that can be fruitfully used in these contexts. Such formulas are obtainable under the assumption that the cost matrix satisfies the Monge property, a notion closely related to submodular optimization (cf.~\cite{BurkardKlinzRudolf1996}). To make our ideas concrete, we conduct a case study in the field of location analysis, where Monge structures arise naturally, for example, in optimal subtree location on trees \cite{PuertoTamir2005, Pozo2024} and in hub location \cite{ContrerasFernandez2014, Pozo2021}.

Particularly, we consider the Discrete Ordered Median Problem (DOMP), which generalizes a large variety of location problems. Given a number $n$ of potential facility locations and a number $p \leq n$ of facilities that are to be opened, the goal is to choose the $p$ best facility locations in the sense that weighted delivery costs from these locations to customer locations are minimized. The weights are order-dependent, meaning that they are applied to the non-decreasingly sorted delivery costs. By appropriate choices of the weight vector, the DOMP encompasses many classical location models as special cases, including the $p$-median, $p$-center, and $k$-centrum problems. In this study, we consider instances with non‑increasing order‑dependent weights. Such objectives occur, for instance, in obnoxious facility problems, where \enquote{undesired} facilities, such as wind turbines, power plants or landfills, must be located (cf.~\cite{church_review_2022}). Motivated by recent advances in this area (e.g.,~\cite{bigler_matheuristic_2024, kalczynski_obnoxious_2026}), we apply Benders decomposition to a mixed-integer linear programming (MILP) formulation for DOMP proposed by \citet{labbe_comparative_2017}. In our setting, the balanced TP under Monge costs appears as the Benders subproblem for generating optimality cuts. Because these cuts are derived from optimal dual solutions to the TP, our formulas yield explicit representations of the Benders cuts, leading to new models.

\subsection{Literature review}
\subsubsection{The Transportation Problem}
Although the history of the TP dates back to the 18th century when Gaspard Monge was concerned with transporting earth (cf.~\cite{monge1781memoire}), the problem's greatest developments occured in the 20th century, especially after its formalization by \citet{hitchcock_distribution_1941} and \citet{koopmans_optimum_1949}. Early major contributions include the application of the simplex method by \citet{dantzig1951transportation} and the network flow algorithm developed by \citet{ford_solving_1956}. Since then, researchers from a wide range of disciplines have worked on the TP, resulting in a vast body of literature. We therefore limit this brief historical overview to the abovementioned milestones and refer to \citet{zhang_review_2025} for a recent review of the problem with modern perspectives. A broad overview of extensions of the TP, together with their corresponding developments and mathematical formulations, is presented by \citet{kacher_comprehensive_2021}. Structural insights and polyhedral foundations related to the TP are treated in the classical textbook by \citet{yemelichev1984polytopes}. Finally, for optimization problems under Monge cost structures, the focus of this paper, the standard reference remains the extensive survey by \citet{BurkardKlinzRudolf1996}, which includes a discussion of the TP. In this context, a seminal work is due to \citet{Hoffman1963SimpleLP}, who showed that the TP can be solved greedily to optimality when the costs satisfy a certain structural condition. Since the underlying idea of this condition was already contained in Monge's 1781 work, Hoffmann named this property after Monge. A formal definition is given in Section~\ref{sec:transportation}.

\subsubsection{The Discrete Ordered Median Problem}
At the beginning of the century, \citet{nickel_discrete_2001} laid the foundations of the DOMP literature by formulating the problem as a mixed-integer quadratic program and subsequently linearizing it to obtain the first MILP model for the DOMP. The early developments on the problem are comprehensively summarized in the textbook by \citet{nickel_puerto_2005}, including improved models as well as exact and heuristic solution algorithms by~\citet{boland_exact_2006, DomnguezMarin2003}, and~\citet{dominguez-marin_heuristic_2005}. In 2009, \citet{marin_flexible_2009} proposed a DOMP model capable of solving specific instances with up to $n = 100$ potential facility locations within reasonable time.  A new modeling approach based on rewriting the ordered median objective function of the DOMP in terms of $k$-sums and exploiting known theory from $k$-sum optimization (cf.~\citet{nickel_velten_flexible_2017, puerto_revisiting_2017}) gave rise to further improvements. Particularly, this idea has led to DOMP formulations that are capable of solving a wide range of instances with up to $n = 200$ locations (cf.~\citet{marin_fresh_2020}). More advanced decomposition-based optimization techniques such as Branch-and-Price or Benders decomposition were applied successfully, resulting in state-of-the-art model performance (cf.~\citet{deleplanque_branch-price-and-cut_2020, ljubic_benders_2024, cherkesly_ranking_2025}). 

It is well-known that the DOMP contains obnoxious facility location problems as special cases, including the obnoxious $p$-median and $p$-center problems. Within the DOMP literature, such instances are recognized as particularly challenging, largely because negative objective weights substantially affect the problem structure. Consequently, recent computational studies employ obnoxious criteria both to demonstrate the strength of their formulations and to find their computational limits (\citet{ljubic_benders_2024, cherkesly_ranking_2025}). The obnoxious $p$-median and $p$-center problems have also been studied independently of the DOMP. In recent years, new models and solution approaches have been proposed (see, e.g., \citet{chiang_obnox_2017, colmenar_obnox_grasp_2016, herran_obnox_vns_2020, bigler_matheuristic_2024, kalczynski_obnoxious_2026}). For a broader overview of obnoxious facility location problems, we refer to the survey by \citet{church_review_2022}.

\subsection{Contributions}
With this work, we contribute to the literature as follows.
\begin{itemize}
    \item We explicitly derive compact expressions for optimal dual variables of TPs under Monge costs through a close analysis of the primal solution generated by the greedy northwest-corner algorithm.
    \item We show how the results on TPs under Monge costs can be exploited when such problems appear as substructures within advanced optimization frameworks by deriving novel MILP formulations for the DOMP with non-increasing objective weights.
    \item We conduct an extensive computational study comparing the proposed formulations with state-of-the-art DOMP models on a broad range of instances, including prominent obnoxious facility location problems, and demonstrate that the proposed models achieve highly competitive, and in several cases superior, computational performance.
\end{itemize}

The remainder of this paper is structured as follows. Section~\ref{sec:transportation} starts with a detailed presentation of the balanced TP, defines the Monge property, and recalls the greedy northwest-corner algorithm together with its extension for retrieving a dual solution. Building on this, it is shown how a so-obtained dual solution can be expressed compactly using formulas that encode the primal solution generated by the northwest-corner algorithm. In Section~\ref{sec:application_to_domp}, the DOMP is introduced formally along with a MILP model from the literature to which a Benders reformulation is applied. The dual solutions identified in Section~\ref{sec:transportation} give rise to two new exponential MILP formulations for DOMP with non-increasing weights; we conclude the section with several theoretical and practical remarks. Section~\ref{sec:numerical_experiments} reports computational experiments that compare the new formulations with two state‑of‑the‑art DOMP models. Section~\ref{sec:conclusion} ends the paper with a concise summary and suggestions for further research.

\section{Transportation problems under Monge costs}\label{sec:transportation}
Let $p \in \mathbb{Z}_{\geq 1}$ be the number of supply nodes with supplies $s \in \mathbb{R}_{\geq 0}^p$, and $q \in \mathbb{Z}_{\geq 1}$ be the number of demand nodes with demands $d \in \mathbb{R}_{\geq 0}^q$. Moreover, let $c \in \mathbb{R}_{\geq 0}^{p \times q}$ be the transportation cost matrix, where entry $c_{ij}$ prescribes the costs of carrying one unit of the transported commodity from supply node $i$ to demand node $j$. The goal is to find the non-negative quantities that should be transferred between any pair of supply and demand nodes such that all supplies and demands are exactly satisfied and the total transportation cost is minimal. It is easy to see that this is possible if and only if supplies and demands are \enquote{balanced}, so we assume $\sum_{i = 1}^p s_i = \sum_{j = 1}^q d_j$.

\begin{table}[ht]
    \centering
    \begin{tabularx}{\linewidth}{cX}
        \hline
        \multicolumn{2}{l}{\textbf{Input parameters for the TP}} \\
        $p, q \in \mathbb{Z}_{\geq 1}$ & number of supply and demand nodes \\
        $s \in \mathbb{R}_{\geq 0}^p$, $d \in \mathbb{R}_{\geq 0}^q$ & supply and demand quantities; $\sum_{i = 1}^p s_i = \sum_{j = 1}^q d_j$ \\
        $c \in \mathbb{R}_{\geq 0}^{p \times q}$ & unit transportation costs $c_{ij}$ for each supply-demand node pair $(i, j)$ \\
        \hline
    \end{tabularx}
    \label{tab:TP_notation}
\end{table}

Consider the standard LP formulation of the classical balanced TP, first given by \citet{hitchcock_distribution_1941}, and stated here as model~\eqref{transportation}. It employs continuous, non-negative decision variables $x_{ij}$ for all $(i, j) \in [p] \times [q]$\footnote{For $n \in \mathbb{Z}_{\geq 1}$, we use the notation $[n] = \{1, \dots, n\}$.}, representing the quantity shipped from supply node $i$ to demand node $j$.
\begin{mini!}[2]<b>
    {}
    {\sum_{i = 1}^p \sum_{j = 1}^q c_{ij} \, x_{ij}}
    {\label{transportation}}{(\text{TP})}
    \addConstraint{\sum_{j = 1}^q x_{ij}}{= s_i \phantom{,}\quad}{\forall i \in [p] \label{transportation:c1}}
    \addConstraint{\sum_{i = 1}^p x_{ij}}{= d_j \phantom{,}\quad}{\forall j \in [q] \label{transportation:c2}}
    \addConstraint{x_{ij}}{\geq 0 \phantom{,}\quad}{\forall i \in [p],\, j \in [q] \label{transportation:c3}}
\end{mini!}

Feasibility of this LP is guaranteed by the balancedness condition, and the problem is clearly bounded. Moreover, since the constraint matrix is totally unimodular, any basic feasible solution will be integer-valued if the right-hand sides are integral (e.g., cf.~\cite{bazaraa2011linear}).

Throughout this section, we assume that $c$ satisfies the \textit{Monge property}. That is, the inequality $c_{ij} + c_{i'j'} \leq c_{ij'} + c_{i'j}$ holds for all $1 \leq i < i' \leq p$, $1 \leq j < j' \leq q$. In that case, the greedy northwest-corner rule always yields an optimal solution to \eqref{transportation}. This algorithm starts in the northwest-corner of the $p \times q$ variable matrix, that is, at index pair $(i, j) = (1, 1)$ and sets $x_{ij}$ to the largest number of units possible to be transported from supplier $i$ to demand point $j$. After updating the supplies and demands according to the already sent units, the procedure advances to cell $(i+1, j)$ if supplier $i$ has reached its capacity or to cell $(i, j+1)$ if demand $j$ is satisfied. Like this, the procedure iterates until all available units are shipped. Algorithm~\ref{NW_corner_rule} is the version of this greedy northwest-corner rule presented by \citet{queyranne_general_1998} and specialized to the classical TP.

\begin{algorithm}[ht]
\caption{Northwest-corner rule}\label{NW_corner_rule}
\DontPrintSemicolon
$x \gets \{\}$\;
$\eta \gets \{\}$\;
$\delta_i^1 \gets s_i \quad \forall i \in [p]$ \tcp*[r]{remaining supplies}
$\delta_j^2 \gets d_j \quad \forall j \in [q]$ \tcp*[r]{remaining demands}
$a \gets (1,1)$ \tcp*[r]{start in the northwest-corner}
$t \gets 0$\;
\While{$a_2 \leq q$}{
    $t \gets t+1$\;
    $a^t \gets a$\;
    $x_{a^t} \gets \min\{\delta_{a^t_1}^1, \delta_{a^t_2}^2\}$; append $(a^t, x_{a^t})$ to $x$\;
    $\delta_{a^t_1}^1 \gets \delta_{a^t_1}^1 - x_{a^t}$\;
    $\delta_{a^t_2}^2 \gets \delta_{a^t_2}^2 - x_{a^t}$\;
    \If{$\delta_{a^t_1}^1 = 0$ and $a^t_1 < p$}{
        $\eta_t \gets 1$; append $\eta_t$ to $\eta$\;
        $a^t_1 \gets a^t_1+1$ \tcp*[r]{move down}
    }
    \Else{
        $\eta_t \gets 2$ ; append $\eta_t$ to $\eta$\;
        $a^t_2 \gets a^t_2+1$ \tcp*[r]{move right}
    }
    $a \gets a^t$
}
$T \gets t$\;
\Return{$\{(a^1, x_{a^1}), \dots, (a^T, x_{a^T})\}, \{\eta_1, \dots, \eta_{T-1}\}$}
\end{algorithm}

Algorithm~\ref{NW_corner_rule} moves along a staircase-shaped path from cell $(1,1)$ to cell $(p,q)$: between cell $a^t$ and $a^{t+1}$, it either advances one row (\enquote{down}) or one column (\enquote{right}) on the $p \times q$ grid. This movement can be characterized as follows.
\begin{observation}\label{NW_corner_characterization}
    We have $(i, j) \in \{ a^1, \dots, a^T \} =: \mathcal{A}$ if and only if one of the following holds:
    \begin{enumerate}
        \item[(i)] $(i, j) = (1, 1)$
        \item[(ii)] $(i-1, j) \in \mathcal{A}$ and $\sum_{k=1}^{i-1} s_k \leq \sum_{k=1}^{j} d_k$ and $i-1 < p$ 
        \item[(iii)] $(i, j-1) \in \mathcal{A}$ and $\bigl(\sum_{k=1}^{i} s_k > \sum_{k=1}^{j-1} d_k$ or $i = p \bigr)$ 
    \end{enumerate}
    \begin{proof}
        The algorithm is initialized with $a^1 = (1, 1)$. From there, it proceeds by moving one row downwards or one column to the right. \textit{(ii)} characterizes a downward movement from $(i-1, j)$ to $(i, j)$. This happens exactly when $i - 1 < p$ and the total supply up to $i-1$ does not exceed the total demand up to $j$, that is, when $\sum_{k=1}^{i-1} s_k \leq \sum_{k=1}^{j} d_k$. Similarly, \textit{(iii)} describes a move to the right from $(i, j-1)$ to $(i, j)$. Such a move occurs exactly when the total supply up to $i$ exceeds the total demand up to $j$, that is, $\sum_{k=1}^{i} s_k > \sum_{k=1}^{j-1} d_k$, or when $i = p$. Note that in the latter case, it holds $\sum_{k=1}^{i} s_k = \sum_{k=1}^{j-1} d_k$. Overall, we see that \textit{(i), (ii), (iii)} inductively describe the cells $a^1, \dots, a^T$ traversed by Algorithm~\ref{NW_corner_rule}.
    \end{proof}
\end{observation}

Staircase patterns in transportation matrices have been studied from a broader perspective by \citet{dahl_transportation_2008}. The pattern of a transportation matrix $(x_{ij})_{\substack{i \in [p] \\ j \in [q]}}$ refers to the positions of its nonzero entries. In his work, Dahl characterizes transportation matrices with a given staircase pattern in terms of certain \enquote{critical positions} within that pattern. In particular, he shows that if specific inequalities involving the input vectors $s$ and $d$ hold for each of these critical positions, then the path generated by the northwest-corner rule is contained in that pattern. Since we focus exclusively on the staircase pattern traced by the northwest-corner algorithm, we can provide, for our purposes, a more convenient characterization that does not rely on critical positions. The result is stated in Proposition~\ref{NW_corner_characterization_2}.

\begin{proposition}\label{NW_corner_characterization_2}
    For the cells $\mathcal{A} = \{a^1, \dots, a^T\}$ traversed by Algorithm~\ref{NW_corner_rule}, it holds
    \begin{equation}
        \mathcal{A} = \bigg\{(i,j) \in [p] \times [q] \, \bigg| \, \bigg( \sum_{k=1}^{i-1} s_k \leq \sum_{k=1}^{j} d_k \, \lor \, i = 1 \bigg) \land \bigg( \sum_{k=1}^{i} s_k > \sum_{k=1}^{j-1} d_k \, \lor \, i = p \, \lor \, j = 1 \bigg) \bigg\}. \label{NW_characterization_equation}
    \end{equation}
    \begin{proof}
        The key structural insight leading to this claim is that if a tuple $(i, j)$ lies strictly \enquote{below} the staircase path $\mathcal{A}$, $\sum_{k=1}^{i-1} s_k \leq \sum_{k=1}^{j} d_k$ \textit{cannot} hold true. Similarly, if $(i, j)$ lies strictly \enquote{above} the staircase, $\sum_{k=1}^{i} s_k > \sum_{k=1}^{j-1} d_k$ \textit{cannot} hold true. Intuitively, this means that the condition in Observation~\ref{NW_corner_characterization} \textit{(ii)} for down moves of Algorithm~\ref{NW_corner_rule} can never hold below the staircase while the condition in Observation~\ref{NW_corner_characterization} \textit{(iii)} for right moves cannot hold above the staircase. In that sense, the cells $\mathcal{A}$ are exactly at the intersection between feasible down moves and right moves of Algorithm~\ref{NW_corner_rule}.

        We start by proving the aforementioned structural insight. Let $(i, j) \in [p] \times [q]$ such that $i > a_1^t$ for every $t \in [T]$ with $a_2^t = j$, that is, $(i, j) \notin \mathcal{A}$ lies strictly below the staircase. Suppose for contradiction that $\sum_{k=1}^{i-1} s_k \leq \sum_{k=1}^{j} d_k$. The staircase-like movement of Algorithm~\ref{NW_corner_rule} implies the existence of $\tilde{i} < i$ such that $(\tilde{i}, j) \in \mathcal{A}$ and $(\tilde{i}, j+1) \in \mathcal{A}$. Note that it must hold $j < q$ because otherwise $(i, j) \in \mathcal{A}$. By Observation~\ref{NW_corner_characterization} \textit{(iii)}, we have $\sum_{k=1}^{\tilde{i}} s_k > \sum_{k=1}^{j} d_k$ or $\tilde{i} = p$. The latter is not possible since $\tilde{i} < i \leq p$, so
        \begin{equation*}
            \sum_{k=1}^{j} d_k < \sum_{k=1}^{\tilde{i}} s_k \leq \sum_{k=1}^{i-1} s_k \leq \sum_{k=1}^{j} d_k.
        \end{equation*}
        We conclude from this contradiction that the assumption was wrong.

        Similarly, let $(i, j) \in [p] \times [q]$ such that $j > a_2^t$ for every $t \in [T]$ with $a_1^t = i$, that is, $(i, j) \notin \mathcal{A}$ lies strictly above the staircase. Suppose for contradiction that $\sum_{k=1}^{i} s_k > \sum_{k=1}^{j-1} d_k$ or $i=p$. The latter is not possible since $(p, q) \in \mathcal{A}$ and thus, the assumption on $j$ would imply $j > q$. So, assume $i < p$ and $\sum_{k=1}^{i} s_k > \sum_{k=1}^{j-1} d_k$. The staircase-like movement of Algorithm~\ref{NW_corner_rule} implies the existence of $\tilde{j} < j$ such that $(i, \tilde{j}) \in \mathcal{A}$ and $(i+1, \tilde{j}) \in \mathcal{A}$. By Observation~\ref{NW_corner_characterization} \textit{(ii)}, we have $\sum_{k=1}^{i} s_k \leq \sum_{k=1}^{\tilde{j}} d_k$. Therefore,
        \begin{equation*}
            \sum_{k=1}^{j-1} d_k < \sum_{k=1}^{i} s_k \leq \sum_{k=1}^{\tilde{j}} d_k \leq \sum_{k=1}^{j-1} d_k.
        \end{equation*}
        This shows the second part of the key structural insight.

        For the remainder of this proof, the right-hand side of \eqref{NW_characterization_equation} is denoted by $\mathcal{B}$. We proceed by proving $\mathcal{A} \subseteq \mathcal{B}$. For this, let $(i, j) \in \mathcal{A}$. By Observation~\ref{NW_corner_characterization} three scenarios are possible. \textit{(i)} If $(i, j) = (1, 1)$, trivially $(i, j) \in \mathcal{B}$. \textit{(ii)} If $(i-1, j) \in \mathcal{A}$ and $\sum_{k=1}^{i-1} s_k \leq \sum_{k=1}^{j} d_k$ and $i-1 < p$, we distinguish multiple cases. If $i = p$ or $j = 1$, clearly $(i, j) \in \mathcal{B}$. So, let $j > 1$, $i < p$ and suppose for contradiction $(i, j) \notin \mathcal{B}$, that is, $\sum_{k=1}^{i} s_k \leq \sum_{k=1}^{j-1} d_k$. But this is not possible due to our structural property and the fact that $(i+1, j-1) \in [p] \times [q]$ lies strictly below the staircase. Consequently, $\sum_{k=1}^{i} s_k > \sum_{k=1}^{j-1} d_k$ and $(i, j) \in \mathcal{B}$. \textit{(iii)} Now, let $(i, j-1) \in \mathcal{A}$ and $\bigl(\sum_{k=1}^{i} s_k > \sum_{k=1}^{j-1} d_k$ or $i = p \bigr)$. Suppose for contradiction $(i, j) \notin \mathcal{B}$, that is, $i > 1$ and $\sum_{k=1}^{i-1} s_k > \sum_{k=1}^{j} d_k$. By the fact that the considered TP is balanced, this strict inequality can only hold if $j < q$. As before, we therefore obtain a contradiction to the structural property since $(i-1, j+1) \in [p] \times [q]$ is strictly above the staircase. 

        To prove $\mathcal{B} \subseteq \mathcal{A}$, let $(i, j) \in \mathcal{B}$ and suppose $(i, j) \notin \mathcal{A}$. Then $(i, j)$ must either be strictly below or above the staircase path given by $\mathcal{A}$. If it is strictly below, $i = 1$ cannot hold, so we must have $\sum_{k=1}^{i-1} s_k \leq \sum_{k=1}^{j} d_k$ by the definition of $\mathcal{B}$. However, this is an immediate contradiction to the structural property. If $(i, j)$ is strictly above the staircase, it is neither possible that $i = p$ nor that $j = 1$. Hence $\sum_{k=1}^{i} s_k > \sum_{k=1}^{j-1} d_k$, again contradicting the known structural insight.

        Altogether, we see that $\mathcal{A} = \mathcal{B}$.
    \end{proof}
\end{proposition}

Introducing dual variables $u_i$ for constraints~\eqref{transportation:c1} and $v_j$ for~\eqref{transportation:c2}, one obtains the following dual LP for problem~\eqref{transportation}.
\begin{maxi!}[2]
    {}
    {\sum_{i = 1}^p s_i u_i + \sum_{j = 1}^q d_j v_j}
    {\label{transportation_dual}}{}
    \addConstraint{u_i + v_j}{\leq c_{ij} \phantom{,}\quad}{\forall i \in [p], j \in [q] \label{transportation_dual:c1}}
    \addConstraint{u_i, v_j}{\in \mathbb{R} \phantom{,}\quad}{\forall i \in [p], j \in [q] \label{transportation_dual:c2}}
\end{maxi!}
Feasibility and boundedness of the primal LP~\eqref{transportation} imply, by strong duality, that the dual LP~\eqref{transportation_dual} attains a finite optimum. \citet{queyranne_general_1998} also provide an algorithm for solving the dual based on the output of the northwest-corner rule for the primal. This algorithm backtracks the cells $a^1, \dots, a^T$, thereby assigning values to the dual variables depending on whether the primal algorithm moved down or right when producing the northwest-corner solution. The precise steps are shown in Algorithm~\ref{dual_algo_backward} which assumes the output of Algorithm~\ref{NW_corner_rule} as input.

\begin{algorithm}[ht]
\caption{Dual backward recursion}\label{dual_algo_backward}
\DontPrintSemicolon
Declare $u \in \mathbb{R}^p$ and $v \in \mathbb{R}^q$\;
$v_q \gets 0$\;
$\eta_T \gets 1$ \tcp*[r]{$u_p$ will be set in the first iteration}
\For{$t$ from $T$ down to $1$}{
    \If{$\eta_t = 1$}{
        $u_{a_1^t} \gets c_{a^t} - v_{a_2^{t}}$\;
    }
    \ElseIf{$\eta_t = 2$}{
        $v_{a_2^t} \gets c_{a^t} - u_{a_1^{t}}$\;
    }
}
\Return{u, v}
\end{algorithm}

The backward recursion in Algorithm~\ref{dual_algo_backward} can be reversed without compromising the correctness of the procedure which will be shown below.

\begin{algorithm}[ht]
\caption{Dual forward recursion}\label{dual_algo_forward}
\DontPrintSemicolon
Declare $u \in \mathbb{R}^p$ and $v \in \mathbb{R}^q$\;
$u_1 \gets 0$\;
$\eta_0 \gets 2$ \tcp*[r]{$v_1$ will be set in the first iteration}
\For{$t$ from $1$ to $T$}{
    \If{$\eta_{t-1} = 1$}{
        $u_{a_1^t} \gets c_{a^t} - v_{a_2^{t}}$\;
    }
    \ElseIf{$\eta_{t-1} = 2$}{
        $v_{a_2^t} \gets c_{a^t} - u_{a_1^{t}}$\;
    }
}
\Return{u, v}
\end{algorithm}

\begin{lemma}[Correctness of Algorithm~\ref{dual_algo_forward}]\label{lemma_correctness}
    Let $(u, v)$ be the solution returned by Algorithm~\ref{dual_algo_backward}. Then, Algorithm~\ref{dual_algo_forward} returns $(u-u_1, v+u_1)$\footnote{For a vector $y \in \mathbb{R}^d$ and a scalar $\alpha \in \mathbb{R}$, the sum $y + \alpha := (y_i + \alpha)_{i \in [d]}$ is understood componentwise.}. This solution is feasible for \eqref{transportation_dual} and has the same objective value as $(u, v)$.
    \begin{proof}
        We show the first part of the lemma by induction. Let $(\tilde{u}, \tilde{v})$ be the solution returned by Algorithm~\ref{dual_algo_forward} after $t$ iterations. For $t = 1$, we have $(\tilde{u}_1, \tilde{v}_1) = (0, c_{11})$ and thus, immediately $\tilde{u}_1 = 0 = u_1 - u_1$. Moreover, $u_1 + v_1 = c_{11} = \tilde{v}_1$, so the claim holds after iteration $t = 1$. For the inductive step, suppose that the claim holds true in some iteration $t < T$. That is, $(\tilde{u}_{a_1^t}, \tilde{v}_{a_2^t}) = (u_{a_1^t} - u_1, v_{a_2^t} + u_1)$. For iteration $t+1$, there are two cases. If $\eta_t = 1$, the algorithm sets $\tilde{u}_{a_1^{t+1}} \gets c_{a^{t+1}} - \tilde{v}_{a_2^{t+1}}$. Since this corresponds to a \enquote{down} movement, we have $a^{t+1} = (a^t_1 + 1, a^t_2)$ implying $\tilde{v}_{a_2^{t+1}} = \tilde{v}_{a_2^{t}} = v_{a_2^t} + u_1$. Hence, $\tilde{u}_{a_1^{t+1}} = c_{a^{t+1}} - v_{a_2^t} - u_1 = u_{a_1^{t+1}} - u_1$. So, we have
        $(\tilde{u}_{a_1^{t+1}}, \tilde{v}_{a_2^{t+1}}) = (u_{a_1^{t+1}} - u_1, v_{a_2^{t+1}} + u_1)$. Analogously, one can show the case $\eta_t = 2$. Now, it follows by induction that Algorithm~\ref{dual_algo_forward} indeed returns $(u-u_1, v+u_1)$.

        Feasibility of the $(u-u_1, v+u_1)$ is implied by the feasibility of $(u, v)$: for every $i \in [p]$ and $j \in [q]$, it holds $(u_i - u_1) + (v_j + u_1) = u_i + v_j \leq c_{ij}$. Moreover, the objective values coincide:
        \begin{equation*}
            \sum_{i = 1}^p s_i (u_i-u_1) + \sum_{j = 1}^q d_j (v_j + u_1) = \sum_{i = 1}^p s_i u_i + \sum_{j = 1}^q d_j v_j - u_1 \Bigg( \underbrace{\sum_{i = 1}^p s_i - \sum_{j = 1}^q d_j}_{=0} \Bigg)= \sum_{i = 1}^p s_i u_i + \sum_{j = 1}^q d_j v_j.
        \end{equation*}
        To conclude, the correctness of Algorithm~\ref{dual_algo_forward} follows from the correctness of Algorithm~\ref{dual_algo_backward}.
    \end{proof}
\end{lemma}

We remark that the initialization $u_1 \gets 0$ of Algorithm~\ref{dual_algo_forward} is somewhat arbitrary and could be replaced by any constant and one may even initialize $v_1$ instead of $u_1$ as the following lemma shows.

\begin{lemma}\label{lemma_initialization}
    Let $(\bar{u}, \bar{v})$ be the solution returned by Algorithm~\ref{dual_algo_forward} and let $b \in \mathbb{R}$. Then, the following holds.
    \begin{enumerate}
        \item[(i)] If the initialization is changed to $u_1 \gets b$, the output is $(\bar{u}+b, \bar{v}-b)$.
        \item[(ii)] If the initialization is changed to $v_1 \gets b, \, \eta_0 \gets 1$, the output is $(\bar{u} + (\bar{v}_1 - b), \bar{v} - (\bar{v}_1 - b))$.
    \end{enumerate}
    In both cases, the output remains feasible for \eqref{transportation_dual} with the same objective value as $(\bar{u}, \bar{v})$.
\end{lemma}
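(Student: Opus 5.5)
The plan is to mirror the induction from the proof of Lemma~\ref{lemma_correctness}. The key structural fact is that Algorithm~\ref{dual_algo_forward} assigns each variable exactly once. Along the path $a^1,\dots,a^T$ it sets either $u_{a_1^t}$ or $v_{a_2^t}$ from the other, already determined, variable through the equation $u_{a_1^t}+v_{a_2^t}=c_{a^t}$. Consequently, if one run of the recursion starts from a shifted initial value, the shift should propagate with alternating sign: every $u$ is shifted by $+\beta$ and every $v$ by $-\beta$.

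For part (i), let $(\hat u,\hat v)$ denote the iterates of the modified algorithm with $u_1\gets b$, and let $(\bar u,\bar v)$ be those of the original run. I will prove by induction on $t$ that after iteration $t$ we have $\hat u_{a_1^t}=\bar u_{a_1^t}+b$ and $\hat v_{a_2^t}=\bar v_{a_2^t}-b$.
\begin{itemize}
\item Base case $t=1$: since $\eta_0=2$, we get $\hat v_1=c_{11}-b=\bar v_1-b$, while $\hat u_1=b=\bar u_1+b$ because $\bar u_1=0$.
\item Inductive step, case $\eta_t=1$ (down move): the column is unchanged, so $\hat v_{a_2^{t+1}}=\hat v_{a_2^t}=\bar v_{a_2^t}-b$. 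Hence $\hat u_{a_1^{t+1}}=c_{a^{t+1}}-\bar v_{a_2^{t+1}}+b=\bar u_{a_1^{t+1}}+b$.
\item Inductive step, case $\eta_t=2$ (right move): this is symmetric, with the row unchanged.
\end{itemize}
Since every index $i\in[p]$ and $j\in[q]$ occurs on the staircase path (Observation~\ref{NW_corner_characterization}), the claim covers all components.

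For part (ii), the initialization is $v_1\gets b$ with $\eta_0=1$. In the first iteration this sets $u_1=c_{11}-b$. From iteration $2$ onward the algorithm then runs exactly like the original procedure (with $\eta_1,\dots$ unchanged) started from $u_1=c_{11}-b$. The only subtlety is to make this identification precise: iteration $1$ of the modified run produces the same state as iteration $1$ of the part (i) run with $b'=c_{11}-b$. Indeed, that run has $v_1=c_{11}-b'=b$ and $u_1=b'$. All later iterations depend only on this state and on $\eta$, so the two outputs coincide. By part (i), the output is $(\bar u+b',\bar v-b')$. Since $\bar v_1=c_{11}$ (because $\bar u_1=0$), we have $b'=\bar v_1-b$, which gives the claimed formula.

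Feasibility and equality of objective values then follow exactly as in Lemma~\ref{lemma_correctness}. For any shift $\beta$, $(\bar u_i+\beta)+(\bar v_j-\beta)=\bar u_i+\bar v_j\le c_{ij}$. The objective changes by $\beta\bigl(\sum_i s_i-\sum_j d_j\bigr)=0$ by balancedness, and feasibility of $(\bar u,\bar v)$ itself comes from Lemma~\ref{lemma_correctness}. The only mildly delicate step is the bookkeeping in (ii), namely checking that the first iteration under $\eta_0=1$ correctly reduces to an instance of (i). Beyond that, the argument is a routine induction.
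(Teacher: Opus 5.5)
Your proposal is correct and follows the route the paper intends: the paper omits the proof, saying only that it mirrors the induction along the staircase path used for Lemma~\ref{lemma_correctness}, and your part~(i) is exactly that induction. For part~(ii), you reduce to part~(i) with $b' = c_{11} - b = \bar{v}_1 - b$ by observing that the state after the first iteration coincides; this is a clean way to handle the changed initialization and stays within the same argument.
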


The proof of Lemma~\ref{lemma_initialization} follows the same argumentation used to prove Lemma~\ref{lemma_correctness}. It is therefore omitted.

For our application in Section~\ref{sec:application_to_domp}, we are particularly interested in optimal dual variables $(u, v)$. It will be helpful to rewrite the recursive formulas for $u$ and $v$ given in Algorithm~\ref{dual_algo_forward} into a compact expression that encodes the structure of the primal solution.

For $i \in \{2, \dots, p\}$, define $j_i := \min \{j \in [q] \, | \, (i,j) \in \{a^1, \dots, a^T\}\}$. As the NW corner rule moves in a staircase path from cell $(1,1)$ to cell $(p,q)$ while advancing either one row or one column per iteration, the existence of $j_i$ is guaranteed for all $i \in \{2, \dots, p\}$. Observe that Algorithm~\ref{dual_algo_forward} sets $u_i$ in iteration $t$ where $a^t = (i, j_i)$. Using this notation, the next lemma provides compact formulas for $u$ and $v$ computed by Algorithm~\ref{dual_algo_forward}.

\begin{lemma}\label{lemma_u_v_formulas}
    Algorithm~\ref{dual_algo_forward} returns the following solution: \footnote{We assume the convention that an empty sum has value equal to $0$.}
    \begin{align}
        &u_i = \sum_{k=2}^{i} \bigl( c_{k j_k} - c_{k-1, j_k} \bigr) &i \in [p], \label{u_formula} \\
        &v_j = c_{1j} + \sum_{\substack{k \in [p], \, k \geq 2 \\ j_k \leq j}} \bigl( c_{kj} - c_{k-1, j} - c_{k j_k} + c_{k-1, j_k} \bigr) &j \in [q]. \label{v_formula}
    \end{align}
    \begin{proof}
        First, we show \eqref{u_formula} by induction. For $i = 1$, the sum is empty, so $u_1 = 0$ which is coherent with how Algorithm~\ref{dual_algo_forward} sets $u_1$. For the inductive step suppose the formula is correct for some $i \in [p-1]$, that is, $u_i = \sum_{k=2}^{i} \bigl( c_{k j_k} - c_{k-1, j_k} \bigr)$. Recall that, at iteration $t$ with $a^t = (i, j_i)$, Algorithm~\ref{dual_algo_forward} sets $u_{i} \gets c_{i j_i} - v_{j_i}$. It proceeds by setting $v_{j'}$ for all $j' \in \{j_i+1, \dots, j_{i+1}\}$. (If this set is empty, then $j_{i+1} = j_i$ and the procedure continues directly with $u_{i+1}$.) Then, it assigns $u_{i+1} \gets c_{i+1, j_{i+1}} - v_{j_{i+1}}$. But $v_{j_{i+1}} = c_{i j_{i+1}} - u_i$, so $u_{i+1} = u_i + c_{i+1, j_{i+1}} - c_{i j_{i+1}} = \sum_{k = 2}^{i+1} \bigl( c_{k j_k} - c_{k-1, j_k} \bigr)$. We used the induction assumption in the last equation. This completes the inductive proof.

        To show \eqref{v_formula}, we make use of the previously shown formula \eqref{u_formula}. In particular, let $j \in [q]$ and $(i_j,j) \in \{a^1, \dots, a^T\}$. Then, Algorithm~\ref{dual_algo_forward} assigns $v_j \gets c_{i_j j} - u_{i_j}$. So, using \eqref{u_formula}, $v_j = c_{i_j j} - \sum_{k=2}^{i_j} \bigl( c_{k j_k} - c_{k-1, j_k} \bigr)$. For $i_j = 1$, it holds that $u_{i_j} = 0$, so the algorithm sets $v_j \gets c_{1j}$ which corresponds to the value given by the formula. For $i_j \geq 2$, observe that $i_j = \max \{k \in [p] \, | \, k \geq 2: j_k \leq j\}$. Therefore, we may rewrite $v_j = c_{i_j j} - \sum_{\substack{k \in [p], \, k \geq 2 \\ j_k \leq j}} \bigl( c_{k j_k} - c_{k-1, j_k} \bigr)$. Moreover, it is easy to see by a telescoping sum argument that $c_{i_j j} = c_{1j} + \sum_{\substack{k \in [p], \, k \geq 2 \\ j_k \leq j}} \bigl( c_{k j} - c_{k-1, j} \bigr)$. Putting everything together, we obtain $v_j = c_{1j} + \sum_{\substack{k \in [p], \, k \geq 2 \\ j_k \leq j}} \bigl( c_{kj} - c_{k-1, j} - c_{k j_k} + c_{k-1, j_k} \bigr)$.
    \end{proof}
\end{lemma}

With Lemma~\ref{lemma_u_v_formulas}, we have a compact formula for computing a solution to \eqref{transportation_dual}. The formula encodes the staircase path traversed by the corresponding primal solution obtained from the northwest-corner rule. This encoding is mainly based on the indices $j_i$ for all $i \in \{2, \dots, p\}$. Alternatively, one may consider an encoding based on indices $i_j$: for all $j \in \{1, \dots, q-1\}$, define $i_j := \max \{i \in [p] \, | \, (i,j) \in \{a^1, \dots, a^T\}\}$. If Algorithm~\ref{dual_algo_forward} is initialized with $v_1 \gets 0$ and $\eta_0 \gets 1$, it sets $v_j$ in iteration $t$ where $a^t = (i_j, j)$ and we obtain another pair of compact formulas for $u$ and $v$.

\begin{lemma}\label{lemma_u_v_formulas_alternative}
    If Algorithm~\ref{dual_algo_forward} is initialized with $v_1 \gets 0$ and $\eta_0 \gets 1$, it returns the following solution:
    \begin{align}
        &u_i = c_{i1} + \sum_{\substack{k \in [q], \, k \geq 2 \\ i_{k-1} \leq i}} \bigl( c_{ik} - c_{i, k-1} - c_{i_{k-1} k} + c_{i_{k-1}, k-1} \bigr) &i \in [p], \label{u_formula_alternative} \\
        &v_j = \sum_{k=2}^{j} \bigl( c_{i_{k-1} k} - c_{i_{k-1}, k-1} \bigr) &j \in [q]. \label{v_formula_alternative}
    \end{align}
\end{lemma}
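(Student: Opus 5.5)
This is the row/column mirror of Lemma~\ref{lemma_u_v_formulas}, and I will prove it the same way, with the roles of $u$ and $v$ interchanged. First I fix the encoding: $i_j = \max\{i \in [p] \mid (i,j)\in\mathcal{A}\}$ for $j \in [q-1]$, and for convenience also $i_q := p$. With $v_1 \gets 0$ and $\eta_0 \gets 1$, the forward recursion sets $u_1 \gets c_{11}$ in the first iteration. Thereafter it sets each $u_i$ at the cell where the path enters row $i$ by a down move, and each $v_j$ ($j\geq 2$) at the cell $(i_{j-1}, j)$ where the path enters column $j$ by a right move. The staircase structure behind Observation~\ref{NW_corner_characterization} guarantees that a right move out of column $j-1$ happens exactly from row $i_{j-1}$. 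Hence column $j$ is first visited in row $i_{j-1}$ for $j\ge 2$, and row $i$ is entered at a cell of some column $k$ with $i_{k-1} < i \leq i_k$ (taking $i_0 := 1$ with the obvious adjustment for row $1$).

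Step one is formula \eqref{v_formula_alternative}, by induction on $j$. For $j=1$ the empty sum gives $v_1=0$. For the inductive step I use that column $j$ is entered at $(i_{j-1},j)$ from $(i_{j-1},j-1)$, so the algorithm sets $v_j \gets c_{i_{j-1} j} - u_{i_{j-1}}$. The value $u_{i_{j-1}}$ was assigned at a cell of column $j-1$ or earlier. Either it was set from $v_{j-1}$ at the cell $(i_{j-1}, j-1)$, or $i_{j-1}$ is the row in which column $j-1$ was entered, in which case $v_{j-1}$ was set from it. In both cases $u_{i_{j-1}} + v_{j-1} = c_{i_{j-1}, j-1}$. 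Therefore $v_j = v_{j-1} + c_{i_{j-1}j} - c_{i_{j-1},j-1}$, and the induction hypothesis closes the step.

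Step two is formula \eqref{u_formula_alternative}. For row $i$, let $j^{(i)}$ be the column of the cell where $u_i$ is assigned, namely the first cell of row $i$ on the path. Then $j^{(i)} = \max\{k \in [q] \mid k = 1 \text{ or } i_{k-1} \leq i\}$, since the path reaches row $i$ only after leaving all columns $k-1$ with $i_{k-1}<i$. The case $i_{k-1}=i$ needs care: the path enters column $k$ from row $i$, but row $i$ itself was entered in an earlier column. So I would rather use $u_i = c_{i j'} - v_{j'}$ for any $(i,j')\in\mathcal{A}$. Complementary slackness along the path holds because each path cell either set $u$ from $v$ or set $v$ from $u$. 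I take $j'$ to be the last column visited in row $i$, which is $\max\{k \mid k=1 \text{ or } i_{k-1}\le i\}$. Substituting \eqref{v_formula_alternative} then gives $u_i = c_{ij'} - \sum_{k=2}^{j'}(c_{i_{k-1}k}-c_{i_{k-1},k-1})$. Finally I telescope $c_{ij'} = c_{i1} + \sum_{k=2}^{j'}(c_{ik}-c_{i,k-1})$ and note that the index set $\{2\le k\le j'\}$ coincides with $\{k\ge2 \mid i_{k-1}\le i\}$, because $i_{k-1}$ is nondecreasing in $k$.

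The main obstacle is this index bookkeeping: showing that $\{k \geq 2 \mid i_{k-1}\le i\}$ is exactly an initial segment $\{2,\dots,j'\}$ with $(i,j')\in\mathcal{A}$. It follows from the monotonicity of $i_j$ and from the observation that the cells of row $i$ are exactly the columns $k$ with $i_{k-1}\le i\le i_k$. I would argue this directly from the staircase movement, or alternatively invoke Proposition~\ref{NW_corner_characterization_2}. Alternatively, the whole lemma follows by applying Lemma~\ref{lemma_u_v_formulas} to the transposed problem. Transposing swaps supplies and demands, and the transposed cost matrix is still Monge. The subtlety is that the tie-breaking of Algorithm~\ref{NW_corner_rule} is asymmetric, so the transposed path could differ when partial sums coincide. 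This asymmetry is why I prefer the direct induction above.
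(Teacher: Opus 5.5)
Your proof is correct and follows essentially the paper's route. The paper only remarks that the lemma follows ``in the same fashion'' as Lemma~\ref{lemma_u_v_formulas}: induction on the variables that are set when the path enters a new line (here $v_j = v_{j-1} + c_{i_{j-1}j} - c_{i_{j-1},j-1}$), then expressing the other family through a path cell, rewriting the index set and telescoping, exactly as you do. Your explicit use of $u_i + v_j = c_{ij}$ on every path cell, evaluated at the last column $\max\{k \mid k=1 \text{ or } i_{k-1}\le i\}$ of row $i$, makes the index rewriting rigorous. It also correctly sidesteps your earlier misidentification of that maximum as the column where $u_i$ is assigned, and the paper's proof of Lemma~\ref{lemma_u_v_formulas} relies on the same fact only implicitly.
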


Lemma~\ref{lemma_u_v_formulas_alternative} can be proved in the same fashion as Lemma~\ref{lemma_u_v_formulas}. If $(\bar{u}, \bar{v})$ is the solution given by Lemma~\ref{lemma_u_v_formulas}, then, according to Lemma~\ref{lemma_initialization}, the values $(u,v)$ described by Lemma~\ref{lemma_u_v_formulas_alternative} satisfy $(u, v) = (\bar{u} + c_{11}, \bar{v} - c_{11})$. Moreover, if $(\bar{u}, \bar{v})$ is an optimal solution to \eqref{transportation_dual}, then so is $(u, v)$.

We conclude this section with a useful characterization of the indices $j_i$, $i \in \{2, \dots, p\}$ and $i_j$, $j \in \{1, \dots, q-1\}$ that will be helpful for our application of the TP.
\begin{corollary}\label{corollary_get_indices}
    The following holds for $j_i$ and $i_j$ as obtained by Algorithm~\ref{NW_corner_rule}:
    \begin{enumerate}
        \item[(i)] For all $i \in \{2, \dots, p\}$, it holds $j_i = \min \{ j \in [q] \, | \, \sum_{k = 1}^{i-1} s_k \leq \sum_{k = 1}^{j} d_k \}$. 
        \item[(ii)] For all $j \in \{1, \dots, q-1\}$, it holds $i_j = \max \{ i \in [p] \, | \, \sum_{k = 1}^{i-1} s_k \leq \sum_{k = 1}^{j} d_k \}$.
    \end{enumerate}
    \begin{proof}
        To prove $(i)$, let $i \in \{2, \dots, p\}$. By definition of $j_i$ and by Proposition~\ref{NW_corner_characterization_2}, we obtain
        \begin{align*}
            j_i &= \min \{j \in [q] \, | \, (i,j) \in \{a^1, \dots, a^T\}\} \\
            &\overset{\eqref{NW_characterization_equation}}{=} \min \bigg\{j \in [q] \, \bigg| \, \bigg( \sum_{k=1}^{i-1} s_k \leq \sum_{k=1}^{j} d_k \, \lor \, i = 1 \bigg) \land \bigg( \sum_{k=1}^{i} s_k > \sum_{k=1}^{j-1} d_k \, \lor \, i = p \, \lor \, j = 1 \bigg) \bigg\} \\
            &= \min \bigg\{j \in [q] \, \bigg| \, \sum_{k=1}^{i-1} s_k \leq \sum_{k=1}^{j} d_k \bigg\}.
        \end{align*}
        For the last equation, observe that the only condition that could possibly prevent the minimum from choosing $j = 1$ is the lower bound given by $\sum_{k=1}^{i-1} s_k \leq \sum_{k=1}^{j} d_k$. 

        To prove $(ii)$, let $j \in \{1, \dots, q-1\}$. By definition of $i_j$ and by Proposition~\ref{NW_corner_characterization_2}, we obtain
        \begin{align*}
            i_j &= \max \{i \in [p] \, | \, (i,j) \in \{a^1, \dots, a^T\}\} \\
            &\overset{\eqref{NW_characterization_equation}}{=} \max \bigg\{i \in [p] \, \bigg| \, \bigg( \sum_{k=1}^{i-1} s_k \leq \sum_{k=1}^{j} d_k \, \lor \, i = 1 \bigg) \land \bigg( \sum_{k=1}^{i} s_k > \sum_{k=1}^{j-1} d_k \, \lor \, i = p \, \lor \, j = 1 \bigg) \bigg\} \\
            &= \max \bigg\{i \in [p] \, \bigg| \, \sum_{k=1}^{i-1} s_k \leq \sum_{k=1}^{j} d_k \bigg\}.
        \end{align*}
        Similar as before, the only condition preventing the maximum from choosing $i = p$ is $\sum_{k=1}^{i-1} s_k \leq \sum_{k=1}^{j} d_k$, which explains the last equation.
    \end{proof}
\end{corollary}

\section{Application to the Discrete Ordered Median Problem}\label{sec:application_to_domp}
We now apply the formulas derived in Section~\ref{sec:transportation}. First, we introduce all notation needed to present the DOMP and reproduce a specific DOMP model by \citet{labbe_comparative_2017}. Building on that, we use Benders decomposition and the formulas from the previous section to derive two new DOMP models. We conclude the section with theoretical and practical remarks (cf. Subsections~\ref{sec:theoretical_notes} and \ref{sec:practical_notes}).

Let $n \in \mathbb{N}$ be the number of client locations and assume without loss of generality that new facilities can be placed exactly at these locations. Furthermore, let $p \in \mathbb{N}$, $p \leq n$ be the number of facilities that is to be opened. Next, let $c \in \mathbb{R}_{\geq 0}^{n \times n}$ be the cost matrix, where entry $c_{ij}$ indicates the cost of satisfying all the demand of client $i$ from facility $j$. Every customer should be assigned to and fully served by the closest open facility. This implicitly assumes that there are no capacity limits for the opened facilities.

Before explaining the DOMP objective, we define the general ordered median function.
\begin{definition}[Ordered median function]
    For a given $x \in \mathbb{R}^n$, let $x_\leq := (x_{(1)}, x_{(2)}, ..., x_{(n)})$ be a re-ordering of $x$ such that $x_{(1)} \leq x_{(2)} \leq \cdots \leq x_{(n)}$.
    A function $f: \mathbb{R}^n \rightarrow \mathbb{R}$ is defined to be an ordered median function, if there exists a vector $\lambda \in \mathbb{R}^n$ such that 
    \[f(x) = \langle \lambda, x_\leq \rangle = \sum_{i = 1}^n \lambda_i x_{(i)} \quad \forall x \in \mathbb{R}^n.\]
\end{definition}
The DOMP objective function is an ordered median function applied to the vector of customer allocation costs. More precisely, let $Y \subseteq [n]$ be the set of opened facilities and denote the allocation cost for each customer $i$ by $c_i(Y) = \min_{j \in Y} c_{ij}$. Then, $c(Y) := \left(c_i(Y)\right)_{i \in [n]}$ is the allocation cost vector that is to be minimized in the ordered median objective. That is, the DOMP with weight vector $\lambda \in \mathbb{R}^n$ and associated ordered median function $f_\lambda$ can be stated as
\begin{equation}\label{DOMP_definition}
    \min_{\substack{Y \subseteq [n] \\ |Y| = p}} f_\lambda\big(c(Y)\big).
\end{equation}

\begin{table}[ht]
    \centering
    \begin{tabularx}{\linewidth}{cX}
        \hline
        \multicolumn{2}{l}{\textbf{Input parameters for the DOMP}} \\
        $n \in \mathbb{Z}_{\geq 1}$ & number of customer demand points / potential facilities \\
        $p \in \mathbb{Z}_{\geq 1}$ & number of facilities to be opened; $p \leq n$ \\
        $c \in \mathbb{R}_{\geq 0}^{n \times n}$ & allocation costs $c_{ij}$ for each customer-facility pair $(i, j)$ \\
        $\lambda \in \mathbb{R}^n$ & objective function vector \\
        \hline
    \end{tabularx}
    \label{tab:DOMP_notation}
\end{table}

Note that we allow a slight overlap between the notation used for the TP and the DOMP in order to adhere to standard notation (e.g., the cost matrix $c$); the meaning will always be clear from the context.

To facilitate the presentation of our models, we will make use of the following notation later in this section. For a vector $x \in \mathbb{R}^d$, define $\Delta_k^{x} := x_k - x_{k-1}$ for $k \in [d]$ and $x_0 := 0$.

In the literature, one can find plenty of different ways to model \eqref{DOMP_definition} as a MILP (e.g., cf.~\cite{labbe_comparative_2017, marin_fresh_2020, ljubic_benders_2024}). Many of the best-performing models are based on two-index variables $x_{ij} \in \{0, 1\}$ taking value $1$ if and only if client $i$ is served by facility $j$. Furthermore, these models typically use indicator variables $y_j \in \{0,1\}$ for each facility $j$ that are equal to $1$ if and only if facility $j$ is opened. Any feasible solution to the considered ordered median location problem must lie inside the $p$-median polytope which is defined by
\[
X := \left\{\begin{aligned}
  &x \in [0,1]^{n \times n}, \; y \in [0,1]^n \, \bigg| \, \; 
  \sum_{j=1}^n y_{j} = p; \; \sum_{j=1}^n x_{ij} = 1 \; \forall i \in [n]; \;
  x_{ij} \leq y_j \; \forall i,j \in [n]; \\
  &\sum_{\substack{j \in [n] \\ c_{ij} > c_{im}}} x_{ij} + y_m \leq 1 
  \; \forall i, m \in [n]
\end{aligned}\right\}.
\]
The inequalities defining $X$ ensure that exactly $p$ facilities are opened and that each client is assigned to exactly one open facility, such that there exists no closer open facility. In the following, we will write $X_I := X \cap \mathbb{Z}^{(n \times n) \times n}$ for the intersection of the $p$-median polytope with the integer lattice points.

This paper focuses on the \enquote{staircase model} proposed in \cite{labbe_comparative_2017}, which we will refer to as $(\text{DOMP}_{s})$ below. In order to present their formulation, \citet{labbe_comparative_2017} sort the $g$ distinct positive entries of the allocation cost matrix $c$ increasingly and insert an additional zero as the first element. In particular, they write $0 =: c_{(0)} < c_{(1)} < \cdots < c_{(g)} = \max_{i,j \in [n]} c_{ij}$. The model makes use of binary variables $w_{lh}$ that indicate whether the allocation cost in position $l$ is equal to $c_{(h)}$ or not for each $l \in [n]$ and $h \in [g] \cup \{0\}$. For convenience, define $[g]_0 := [g] \cup \{0\}$.

\begin{mini!}[2]<b>
    {}
    {\sum_{h = 0}^g \sum_{l = 1}^n \lambda_l \, c_{(h)} \, w_{lh} \label{DOMP_s:obj}}
    {\label{DOMP_s}}{(\text{DOMP}_{s})}
    \addConstraint{\sum_{h = 0}^g w_{lh}}{= 1 \phantom{,}\quad}{\forall l \in [n] \label{DOMP_s:c1}}
    \addConstraint{\sum_{l = 1}^n w_{lh}}{= \sum_{\substack{i,j \in [n] \\ c_{ij} = c_{(h)}}} x_{ij} \phantom{,}\quad}{\forall h \in [g]_0 \label{DOMP_s:c2}}
    \addConstraint{\sum_{h' < h} w_{l+1,h'} + \sum_{h' \geq h} w_{lh'}}{\leq 1 \phantom{,}\quad}{\forall l \in [n-1], \, h \in [g] \label{DOMP_s:c3}}
    \addConstraint{w_{lh}}{\in \{0,1\} \phantom{,}\quad}{\forall l \in [n],\, h \in [g]_0 \label{DOMP_s:c4}}
    \addConstraint{(x, y)}{\in X_I \label{DOMP_s:c5}}
\end{mini!}
Constraints~\eqref{DOMP_s:c1} ensure that each position $l$ in the ordered allocation cost sequence will be occupied by exactly one value $c_{(h)}$. By constraints~\eqref{DOMP_s:c2}, the number of occurences of each $c_{(h)}$ in the ordered cost sequence will exactly correspond to the number of clients whose allocation cost equals $c_{(h)}$. Next, constraints~\eqref{DOMP_s:c3} ensure that the allocation cost sequence given by the variables $w_{lh}$ is sorted non-decreasingly. Due to the correct definition of the variables $w_{lh}$ by means of constraints~\eqref{DOMP_s:c1} -- \eqref{DOMP_s:c4}, the objective function~\eqref{DOMP_s:obj} correctly computes the ordered $\lambda$-weighted sum of allocation costs which is to be minimized.

For our application of the TP, we study non-increasing objective vectors $\lambda$. We start by observing the possibility to simplify model~\eqref{DOMP_s} in that case.

\begin{lemma}\label{lemma_monotone_staircase_model}
    For any $\lambda \in \mathbb{R}^n$ satisfying $\lambda_1 \geq \lambda_2 \geq \cdots \geq \lambda_n$, the following compact model is a valid DOMP formulation.
    \begin{mini!}[2]<b>
        {}
        {\sum_{h = 0}^g \sum_{l = 1}^n \lambda_l \, c_{(h)} \, w_{lh} \label{mDOMP_s:obj}}
        {\label{mDOMP_s}}{(\text{mDOMP}_{s})}
        \addConstraint{\sum_{h = 0}^g w_{lh}}{= 1 \phantom{,}\quad}{\forall l \in [n] \label{mDOMP_s:c1}}
        \addConstraint{\sum_{l = 1}^n w_{lh}}{= \sum_{\substack{i,j \in [n] \\ c_{ij} = c_{(h)}}} x_{ij} \phantom{,}\quad}{\forall h \in [g]_0 \label{mDOMP_s:c2}}
        \addConstraint{w_{lh}}{\geq 0 \phantom{,}\quad}{\forall l \in [n],\, h \in [g]_0 \label{mDOMP_s:c3}}
        \addConstraint{(x, y)}{\in X_I \label{mDOMP_s:c4}}
    \end{mini!}
    \begin{proof}
        First, we show that the special structure of $\lambda$ renders the ordering constraints~\eqref{DOMP_s:c3} redundant. It is well-known that the scalar product of a non-increasing vector $\lambda$ with a vector $x$ rearranged according to some permutation $\pi$ is minimal if $\pi$ sorts $x$ non-decreasingly (cf.~\cite{inequalities}, Inequality~368). More precisely, if $x_\pi$ denotes the reordering of $x \in \mathbb{R}^n$ according to the permutation $\pi$ and $x_\leq$ denotes the non-decreasingly sorted $x$, then it holds $\langle \lambda, x_\leq \rangle \leq \langle \lambda, x_\pi \rangle$ for every permutation $\pi: [n] \rightarrow [n]$. Now, let $(w^*, x^*, y^*)$ be an optimal solution to \eqref{DOMP_s} excluding constraints~\eqref{DOMP_s:c3} and consider the vector of optimal allocation costs $(c_{(h_l)})_{l \in [n]}$, where $h_l \in [g]_0$ for $l \in [n]$ is such that $w^*_{lh_l} = 1$. The existence of such $h_l$ for each $l \in [n]$ is guaranteed by constraints~\eqref{DOMP_s:c1} and~\eqref{DOMP_s:c4}. According to the above stated rearrangement inequality, the minimization automatically ensures that $c_{(h_l)} \leq c_{(h_{l+1})}$ and thus, $h_l \leq h_{l+1}$ for each $l \in [n-1]$. So, for every $l \in [n-1]$ and $h \leq h_l$, we have $\sum_{h' < h} w^*_{l+1,h'} = 0$ and for $h > h_l$, it holds that $\sum_{h' \geq h} w^*_{l,h'} = 0$. Hence, constraints~\eqref{DOMP_s:c3} are all satisfied.

        Next, the upper bounds $w_{lh} \leq 1$ for every $l \in [n]$ and $h \in [g]_0$ are implied by \eqref{DOMP_s:c1}, so \eqref{DOMP_s:c4} can be replaced by $w_{lh} \geq 0, \, w_{lh} \in \mathbb{Z}$ for all $l \in [n], \, h \in [g]_0$.

        Finally, the integrality restrictions on the $w$-variables can be relaxed. Consider the subproblem \eqref{mDOMP_s:obj} -- \eqref{mDOMP_s:c3} for any fixed $(\bar{x}, \bar{y}) \in X_I$. Since $\sum_{h=0}^g \sum_{\substack{i,j \in [n] \\ c_{ij} = c_{(h)}}} \bar{x}_{ij} = \sum_{i, j \in [n]} \bar{x}_{ij} = n$, this is a balanced TP with integer supplies and demands. As mentioned in Section~\ref{sec:transportation}, such problems are feasible, bounded, and every basic feasible solution is integral.
        
        Overall, we see that model~\eqref{mDOMP_s} always admits an optimal solution $(w^*, x^*, y^*)$ which is feasible for model~\eqref{DOMP_s}. Since the feasible region of \eqref{DOMP_s} is a subset of the feasible region of \eqref{mDOMP_s}, the solution $(w^*, x^*, y^*)$ is optimal for \eqref{DOMP_s}.
    \end{proof}
\end{lemma}

Following an approach presented by \citet{ljubic_benders_2024}, we can derive a Benders reformulation of the staircase model~\eqref{mDOMP_s} for non-increasing $\lambda$ by projecting out the variables $w$ and replacing the objective function by a new variable $\theta$. Let $(\bar{x}, \bar{y}) \in X_I$ and define $\bar{x}_h := \sum_{\substack{i,j \in [n] \\ c_{ij} = c_{(h)}}} \bar{x}_{ij}$ for each $h \in [g]_0$. Then, we are facing the following subproblem.

\begin{mini!}[2]<b>
    {}
    {\sum_{h = 0}^g \sum_{l = 1}^n \lambda_l \, c_{(h)} \, w_{lh}}
    {\label{staircase_sub}}{}
    \addConstraint{\sum_{h = 0}^g w_{lh}}{= 1 \phantom{,}\quad}{\forall l \in [n] \label{staircase_sub:c1}}
    \addConstraint{\sum_{l = 1}^n w_{lh}}{= \bar{x}_h \phantom{,}\quad}{\forall h \in [g]_0 \label{staircase_sub:c2}}
    \addConstraint{w_{lh}}{\geq 0 \phantom{,}\quad}{\forall l \in [n],\, h \in [g]_0 \label{staircase_sub:c3}}
\end{mini!}

As already observed in the proof of Lemma~\ref{lemma_monotone_staircase_model}, this is a balanced TP with integral right-hand sides, which is feasible and bounded. By introducing dual variables $u_l$ for constraints~\eqref{staircase_sub:c1} and $v_h$ for~\eqref{staircase_sub:c2}, the dual LP is as follows.

\begin{maxi!}[2]
    {}
    {\sum_{l = 1}^n u_l + \sum_{h = 0}^g \bar{x}_h v_h}
    {\label{staircase_dual}}{}
    \addConstraint{u_l + v_h}{\leq \lambda_{l} c_{(h)} \phantom{,}\quad}{\forall l \in [n], h \in [g]_0 \label{staircase_dual:c1}}
    \addConstraint{u_l, v_h}{\in \mathbb{R} \phantom{,}\quad}{\forall l \in [n], h \in [g]_0 \label{staircase_dual:c2}}
\end{maxi!}
For any dual feasible solution $(u,v)$, we obtain a Benders cut of the form
\begin{equation}
    \theta \geq \sum_{l = 1}^n u_l + \sum_{h = 0}^g v_h \sum_{\substack{i,j \in [n] \\ c_{ij} = c_{(h)}}} x_{ij}. \label{benders_optimality_cuts}
\end{equation}

Observe that the matrix $\left(\lambda_{l} c_{(h)}\right)_{l \in [n], \, h \in [g]_0}$ satisfies the Monge property: for $1 \leq l < l' \leq n$ and $0 \leq h < h' \leq g$, we see that $\lambda_{l} c_{(h')} + \lambda_{l'} c_{(h)} - (\lambda_{l} c_{(h)} + \lambda_{l'} c_{(h')}) = (\lambda_l - \lambda_{l'})(c_{(h')} - c_{(h)}) \geq 0$. Thus, $\lambda_{l} c_{(h)} + \lambda_{l'} c_{(h')} \leq \lambda_{l} c_{(h')} + \lambda_{l'} c_{(h)}$ holds true.

Therefore, the theory from Section~\ref{sec:transportation} applies. Particularly, the formulas given in Lemma~\ref{lemma_u_v_formulas} and Lemma~\ref{lemma_u_v_formulas_alternative} yield optimal dual variables $u$ and $v$ that can be inserted into the Benders optimality cut expression \eqref{benders_optimality_cuts}. Using Lemma~\ref{lemma_u_v_formulas}, we obtain the following exponential-sized model for DOMP with non-increasing objective vector.

\begin{theorem}\label{transport_domp_formulation}
    For any $\lambda \in \mathbb{R}^n$ satisfying $\lambda_1 \geq \lambda_2 \geq \cdots \geq \lambda_n$, the following exponential-sized model is a valid DOMP formulation.
    \begin{mini!}[2]<b>
        {}
        {\theta}
        {\label{mDOMP_sB1}}{(\text{mDOMP}_{sB1})}
        \addConstraint{\theta}{\geq \sum_{k = 1}^n \Delta_k^\lambda \bigg( (n-k+1) c_{(f(k))} +  \phantom{,}\quad}{\nonumber}
        \addConstraint{\phantom{,}\quad}{ \sum_{\substack{i,j \in [n] \\ c_{ij} > c_{(f(k))}}} (c_{ij} -  c_{(f(k))}) x_{ij} \bigg) \phantom{,}\quad}{ 
        \begin{array}{l}
            \forall f: [n] \rightarrow [g]_0 \text{ mon. incr.} \\
            \text{with } f(1) = 0
        \end{array} \label{mDOMP_sB1:c1}}
        \addConstraint{\theta}{\in \mathbb{R} \phantom{,}\quad}{\label{mDOMP_sB1:c2}}
        \addConstraint{(x, y)}{\in X_I \phantom{,}\quad}{\label{mDOMP_sB1:c3}}
    \end{mini!}
    
    \begin{proof}
        For a monotonically increasing mapping $f: [n] \rightarrow [g]_0$ with $f(1) = 0$, define the vectors $u^f, v^f$ as follows:
        \begin{align}\label{u_f_v_f_formulas}
            \begin{alignedat}{2}
                &u^f_l = \sum_{k=2}^{l} \bigl( \lambda_k - \lambda_{k-1} \bigr) c_{(f(k))}& \quad &l \in [n],\\
                &v^f_{h} = \sum_{\substack{k \in [n] \\ f(k) < h}} \bigl( \lambda_{k} - \lambda_{k-1} \bigr) \bigl( c_{(h)} - c_{(f(k))} \bigr) & \quad &h \in [g]_0.
            \end{alignedat}
        \end{align}

        Note that $v^f_{h} = \lambda_1 c_{(h)} + \sum_{\substack{k \in [n], \, k \geq 2 \\ f(k) \leq h}} \bigl( \lambda_{k} - \lambda_{k-1} \bigr) \bigl( c_{(h)} - c_{(f(k))} \bigr)$ since $\lambda_1 c_{(h)} = \bigl( \lambda_{1} - \lambda_{0} \bigr) \bigl( c_{(h)} - c_{(f(1))} \bigr)$ and $c_{(h)} - c_{(f(k))} = 0$ whenever $f(k) = h$. So, the above formulas \eqref{u_f_v_f_formulas} correspond exactly to those given in Lemma~\ref{lemma_u_v_formulas} for the optimal dual variables $(u, v)$ and $f(k)$ corresponds to the indices $j_i$ encoding the staircase path that Algorithm~\ref{NW_corner_rule} traverses.
        
        Therefore, by Lemma~\ref{lemma_u_v_formulas} and the Benders optimality cut formula \eqref{benders_optimality_cuts}, it remains to show that for any monotonically increasing $f: [n] \rightarrow [g]_0$ with $f(1) := 0$, it holds 
        \begin{equation}\label{u_v_sum}
            \sum_{l = 1}^n u^f_l + \sum_{h = 0}^g v^f_h \sum_{\substack{i,j \in [n] \\ c_{ij} = c_{(h)}}} x_{ij} = \sum_{k = 1}^n \Delta_k^\lambda \bigg( (n-k+1) c_{(f(k))} + \sum_{\substack{i,j \in [n] \\ c_{ij} > c_{(f(k))}}} (c_{ij} -  c_{(f(k))}) x_{ij} \bigg).
        \end{equation}
        Through rearrangement of terms, it is straightforward to verify that
        \begin{equation}\label{u_sum}
            \sum_{l = 1}^n u^f_l = \sum_{k = 1}^n \Delta_k^\lambda (n-k+1) c_{(f(k))}.
        \end{equation}
        Furthermore, we can rewrite
        \begin{alignat}{2}\label{v_sum}
            \sum_{h = 0}^g v^f_h \sum_{\substack{i,j \in [n] \\ c_{ij} = c_{(h)}}} x_{ij} &= \sum_{h = 0}^g \sum_{\substack{k \in [n] \\ f(k) < h}} \Delta_k^\lambda \bigl( c_{(h)} - c_{(f(k))} \bigr) \sum_{\substack{i,j \in [n] \\ c_{ij} = c_{(h)}}} x_{ij} \nonumber \\
            &= \sum_{k=1}^n \Delta_k^\lambda \sum_{\substack{h \in [g]_0 \\ h > f(k)}} \sum_{\substack{i,j \in [n] \\ c_{ij} = c_{(h)}}} \bigl( c_{(h)} - c_{(f(k))} \bigr) x_{ij} \nonumber \\
            &= \sum_{k=1}^n \Delta_k^\lambda \sum_{\substack{i,j \in [n] \\ c_{ij} > c_{(f(k))}}} \bigl( c_{ij} - c_{(f(k))} \bigr) x_{ij}.
        \end{alignat}
        In the last equation, we used that $c_{(\cdot)}$ is strictly increasing: $h > f(k)$ and $c_{ij} = c_{(h)}$ imply that $c_{ij} > c_{(f(k))}$. Conversely, if $c_{ij} > c_{(f(k))}$, then there exists $h \in [g]_0$, $h > f(k)$ such that $c_{ij} = c_{(h)}$.
        
        Finally, adding \eqref{u_sum} and \eqref{v_sum} proves \eqref{u_v_sum}.
    \end{proof}
\end{theorem}

If we use the formulas from Lemma~\ref{lemma_u_v_formulas_alternative} instead of the ones from Lemma~\ref{lemma_u_v_formulas}, we obtain another exponential-sized model for DOMP with non-increasing objective vector.

\begin{theorem}\label{transport_domp_formulation_alternative}
    For any $\lambda \in \mathbb{R}^n$ satisfying $\lambda_1 \geq \lambda_2 \geq \cdots \geq \lambda_n$, the following exponential-sized model is a valid DOMP formulation.
    \begin{mini!}[2]<b>
        {}
        {\theta}
        {\label{mDOMP_sB2}}{(\text{mDOMP}_{sB2})}
        \addConstraint{\theta}{\geq \sum_{k = 0}^{g-1} \Delta_{k+1}^{c_{(\cdot)}} \bigg( \sum_{\substack{l \in [n] \\ l > f(k)}} (\lambda_l - \lambda_{f(k)})  \phantom{,}\quad}{\nonumber}
        \addConstraint{\phantom{,}\quad}{ + \lambda_{f(k)} \sum_{\substack{i,j \in [n] \\ c_{ij} > c_{(k)}}} x_{ij} \bigg) \phantom{,}\quad}{ 
        \begin{array}{l}
            \forall f: [g-1]_0 \rightarrow [n] \text{ mon. incr.}
        \end{array} \label{mDOMP_sB2:c1}}
        \addConstraint{\theta}{\in \mathbb{R} \phantom{,}\quad}{\label{mDOMP_sB2:c2}}
        \addConstraint{(x, y)}{\in X_I \phantom{,}\quad}{\label{mDOMP_sB2:c3}}
    \end{mini!}
    
    \begin{proof}
        For a monotonically increasing mapping $f: [g-1]_0 \rightarrow [n]$, define the vectors $u^f, v^f$ as follows:
        \begin{align}\label{u_f_v_f_formulas_alternative}
            \begin{alignedat}{2}
                &u^f_l = \sum_{\substack{k \in [g-1]_0 \\ f(k) < l}} \bigl( \lambda_l - \lambda_{f(k)} \bigr) (c_{(k+1)} - c_{(k)})& \quad &l \in [n],\\
                &v^f_{h} = \sum_{k = 0}^{h-1} \lambda_{f(k)} \bigl( c_{(k+1)} - c_{(k)} \bigr) & \quad &h \in [g]_0.
            \end{alignedat}
        \end{align}
        
        A simple index shift shows that $u^f_{l} = \lambda_l c_{(0)} + \sum_{\substack{k \in [g] \\ f(k-1) \leq l}} \bigl( \lambda_{l} - \lambda_{f(k-1)} \bigr) \bigl( c_{(k)} - c_{(k-1)} \bigr)$ and $v^f_{h} = \sum_{k = 1}^{h} \allowbreak \lambda_{f(k-1)} \bigl( c_{(k)} - c_{(k-1)} \bigr)$. So, the above formulas \eqref{u_f_v_f_formulas_alternative} correspond exactly to those given in Lemma~\ref{lemma_u_v_formulas_alternative} for the optimal dual variables $(u, v)$ and $f(k)$ corresponds to the indices $i_j$ encoding the staircase path that Algorithm~\ref{NW_corner_rule} traverses.
        
        Therefore, by Lemma~\ref{lemma_u_v_formulas_alternative} and the Benders optimality cut formula \eqref{benders_optimality_cuts}, it remains to show that for any monotonically increasing $f: [g]_0 \rightarrow [n]$ with $f(g) := n$, it holds 
        \begin{equation}\label{u_v_sum_alternative}
            \sum_{l = 1}^n u^f_l + \sum_{h = 0}^g v^f_h \sum_{\substack{i,j \in [n] \\ c_{ij} = c_{(h)}}} x_{ij} = \sum_{k = 0}^{g-1} \Delta_{k+1}^{c_{(\cdot)}} \bigg( \sum_{\substack{l \in [n] \\ l > f(k)}} (\lambda_l - \lambda_{f(k)}) + \lambda_{f(k)} \sum_{\substack{i,j \in [n] \\ c_{ij} > c_{(k)}}} x_{ij} \bigg).
        \end{equation}
        By definition of $u^f_l$, it holds that
        \begin{equation}\label{u_sum_alternative}
            \sum_{l = 1}^n u^f_l = \sum_{k = 0}^{g-1} \Delta_{k+1}^{c_{(\cdot)}} \sum_{\substack{l \in [n] \\ l > f(k)}} (\lambda_l - \lambda_{f(k)}).
        \end{equation}
        Furthermore, we can rewrite
        {\allowdisplaybreaks
        \begin{alignat}{2}\label{v_sum_alternative}
            \sum_{h = 0}^g v^f_h \sum_{\substack{i,j \in [n] \\ c_{ij} = c_{(h)}}} x_{ij} &= \sum_{h = 0}^g \sum_{k = 0}^{h-1} \lambda_{f(k)} \big(c_{(k+1)} - c_{(k)} \big) \sum_{\substack{i,j \in [n] \\ c_{ij} = c_{(h)}}} x_{ij} \nonumber \\
            & = \sum_{h = 1}^{g} \lambda_{f(h-1)} \big(c_{(h)} - c_{(h-1)} \big) \sum_{h' = h}^{g} \sum_{\substack{i,j \in [n] \\ c_{ij} = c_{(h')}}} x_{ij} \nonumber \\
            & = \sum_{h = 0}^{g-1} \lambda_{f(h)} \big(c_{(h+1)} - c_{(h)} \big) \sum_{h' = h+1}^{g} \sum_{\substack{i,j \in [n] \\ c_{ij} = c_{(h')}}} x_{ij} \nonumber \\
            & = \sum_{h = 0}^{g-1} \lambda_{f(h)} \big(c_{(h+1)} - c_{(h)} \big) \sum_{\substack{i,j \in [n] \\ c_{ij} > c_{(h)}}} x_{ij}.
        \end{alignat}
        }
        In the last equation, we used that $c_{(\cdot)}$ is strictly increasing. More precisely, $h' > h$ and $c_{ij} = c_{(h')}$ imply that $c_{ij} > c_{(h)}$. Conversely, if $c_{ij} > c_{(h)}$, then there exists $h' \in [g]_0$, $h' > h$ such that $c_{ij} = c_{(h')}$.
        
        Finally, adding \eqref{u_sum_alternative} and \eqref{v_sum_alternative} proves \eqref{u_v_sum_alternative}.
    \end{proof}
\end{theorem}

\subsection{Theoretical notes}\label{sec:theoretical_notes}
It is clear that both $(\text{mDOMP}_{sB1})$ and $(\text{mDOMP}_{sB2})$ involve exponentially many constraints. A closer inspection shows that the number of constraints is the same for both models.
\begin{lemma}\label{lemma_num_constraints}
    The number of constraints of type \eqref{mDOMP_sB1:c1} and \eqref{mDOMP_sB2:c1} is $\binom{n + g - 1}{n - 1}$.
    \begin{proof}
        Using standard counting techniques, one can show that the number of monotonically increasing functions $f: [a] \rightarrow [b]$ is equal to $\binom{a + b - 1}{a}$ (e.g., cf.~\cite{stanley1997enumerative}). For \eqref{mDOMP_sB1:c1}, we have $a = n-1$ and $b = g+1$, so the number of constraints is $\binom{n + g - 1}{n - 1}$. Analogously, with $a = g$ and $b = n$, we have $\binom{n + g - 1}{g}$ constraints of type \eqref{mDOMP_sB2:c1}. Now, the claim follows from the well-known identity $\binom{N}{k} = \binom{N}{N-k}$ for $0 \leq k \leq N$.
    \end{proof}
\end{lemma}

Recall from Section~\ref{sec:transportation} that the dual solutions given by Lemma~\ref{lemma_u_v_formulas} and Lemma~\ref{lemma_u_v_formulas_alternative} coincide up to a shift by $\lambda_1 c_{(0)} = 0$ (cf. Lemma~\ref{lemma_initialization}). So, in fact, the two formulas yield identical values in our application. This, together with Lemma~\ref{lemma_num_constraints} is a strong indicator that the models $(\text{mDOMP}_{sB1})$ and $(\text{mDOMP}_{sB2})$ are actually identical. Indeed, this is what we show in the next lemma.
\begin{lemma}
    The constraint sets \eqref{mDOMP_sB1:c1} and \eqref{mDOMP_sB2:c1} are identical.
    \begin{proof}
        Let $f^1: [n] \rightarrow [g]_0$, $f(1) = 0$ be a monotonically increasing function. This function defines the constraint of type \eqref{mDOMP_sB1:c1} of the form $\theta \geq \sum_{l = 1}^n u_l^{f^1} + \sum_{h = 0}^g v_h^{f^1} \sum_{\substack{i,j \in [n] \\ c_{ij} = c_{(h)}}} x_{ij}$, where $u^{f^1}$ and $v^{f^1}$ are defined according to \eqref{u_f_v_f_formulas}. We can associate a unique staircase path on the $n \times (g+1)$ grid that connects the northwest with the southeast corner: consider the path $\mathcal{A}$ from $(1, 0)$ to $(n, g)$ that contains all points $(l, f^1(l))$ for $l \in [n]$ such that $f^1(l) = \min\{h \in [g]_0 \, | \, (l,h) \in \mathcal{A} \}$. This path has the form
        \begin{align*}
            \mathcal{A} = \, &\big\{\big(1,0\big), \dots, \big(1, f^1(2)\big), \big(2, f^1(2)\big), \dots, \big(2, f^1(3)\big), \big(3, f^1(3)\big), \dots \\
            & \dots, \big(n-1, f^1(n)\big), \big(n, f^1(n)\big), \dots, \big(n, g\big)\big\}.
        \end{align*}
        Define $f^2:[g-1]_0 \rightarrow [n]$ as it was done earlier in Section~\ref{sec:transportation} by $f^2(h) := \max \{l \in [n] \, | \, (l,h) \in \mathcal{A}\}$ for every $h \in [g-1]_0$. By the staircase structure of $\mathcal{A}$, we see that $f^2$ is monotonically increasing. Therefore, it defines the constraint of type \eqref{mDOMP_sB2:c1} of the form $\theta \geq \sum_{l = 1}^n u_l^{f^2} + \sum_{h = 0}^g v_h^{f^2} \sum_{\substack{i,j \in [n] \\ c_{ij} = c_{(h)}}} x_{ij}$, where $u^{f^2}$ and $v^{f^2}$ are defined according to \eqref{u_f_v_f_formulas_alternative}. Lemma~\ref{lemma_initialization} implies that $(u^{f^2}, v^{f^2}) = (u^{f^1} + \lambda_1 c_{(0)}, v^{f^1} - \lambda_1 c_{(0)}) = (u^{f^1}, v^{f^1})$. As a result, the constraints induced by $f^1$ and $f^2$ are identical.

        Conversely, let $f^2: [g-1]_0 \rightarrow [n]$ be a monotonically increasing function. Again, this function uniquely describes a staircase path connecting the northwest and the southeast corner, namely the path $\mathcal{A}$ from $(1, 0)$ to $(n, g)$ containing all points $(f^2(h), h)$ for $h \in [g-1]_0$ such that $f^2(h) = \max\{l \in [n] \, | \, (l,h) \in \mathcal{A} \}$. It has the form
        \begin{align*}
            \mathcal{A} = \, &\big\{\big(1,0\big), \dots, \big(f^2(0), 0\big), \big(f^2(0), 1\big), \dots, \big(f^2(1), 1\big), \big(f^2(1), 2\big), \dots \\
            &\dots, \big(f^2(g-1), g-1\big), \big(f^2(g-1), g\big), \dots, \big(n, g\big)\big\}.
        \end{align*}
        For $f^1:[n] \rightarrow [g]_0$ with $f^1(1) := 0$ and $f^1(l) := \min \{h \in [g]_0 \, | \, (l,h) \in \mathcal{A}\}$ for every $l \in \{2, \dots, n\}$ we again find that $f^2$ and $f^1$ induce the exact same constraint.
    \end{proof}
\end{lemma}

Even though $(\text{mDOMP}_{sB1})$ and $(\text{mDOMP}_{sB2})$ describe the same polytope, they differ from a computational perspective, which can make one formulation preferable to the other. This issue is discussed in Section~\ref{sec:practical_notes} and revisited in Section~\ref{sec:discussion}.

\subsection{Practical notes}\label{sec:practical_notes}
Since there are exponentially many constraints of type \eqref{mDOMP_sB1:c1} and \eqref{mDOMP_sB2:c1}, they need to be separated when solving the models $(\text{mDOMP}_{sB1})$ and $(\text{mDOMP}_{sB2})$ in practice. Given an integer solution $(\bar{x}, \bar{y}) \in X_I$ at some node of the branch-and-bound (B\&B) tree, one needs to identify the corresponding monotonically increasing function $f$ that encodes the staircase path traversed by the northwest-corner rule. If the objective function value $\bar{\theta}$ at that node is less than $\sum_{l = 1}^n u^f_l + \allowbreak \sum_{h = 0}^g v^f_h  \allowbreak \sum_{\substack{i,j \in [n] \\ c_{ij} = c_{(h)}}} \bar{x}_{ij}$ minus a cut violation tolerance $\varepsilon \geq 0$, then the constraint ${\bar{\theta}\ge } \sum_{l = 1}^n u^f_l + \allowbreak \sum_{h = 0}^g v^f_h  \allowbreak \sum_{\substack{i,j \in [n] \\ c_{ij} = c_{(h)}}} x_{ij}$ is added to the model. Algorithm~\ref{cut_separation_algo} summarizes this procedure.

\begin{algorithm}[ht]
\caption{Benders cut separation for $(\text{mDOMP}_{sB1})$ and $(\text{mDOMP}_{sB2})$}\label{cut_separation_algo}
\DontPrintSemicolon
Consider an integer solution $(\bar{x}, \bar{y}) \in X_I$ with objective value $\bar{\theta}$ at some B\&B node\;
Identify the monotonically increasing $f$ corresponding to $(\bar{x}, \bar{y})$ \;
Compute the vectors $u^f$ and $v^f$ \;
Set $\mathrm{RHS} \leftarrow \sum_{l = 1}^n u^f_l + \sum_{h = 0}^g v^f_h \sum_{\substack{i,j \in [n] \\ c_{ij} = c_{(h)}}} \bar{x}_{ij}$\;
\If{$\bar{\theta} < \mathrm{RHS} - \varepsilon$}{
    Add the cut $\theta \geq \sum_{l = 1}^n u^f_l + \sum_{h = 0}^g v^f_h \sum_{\substack{i,j \in [n] \\ c_{ij} = c_{(h)}}} x_{ij}$ \;
}
\end{algorithm}

Next, we describe how to obtain the function $f$ from a given solution $(\bar{x}, \bar{y}) \in X_I$ for both cases $(\text{mDOMP}_{sB1})$ and $(\text{mDOMP}_{sB2})$. The first step is to find the indices $h_l$ or $l_h$, respectively, that encode the solution produced by Algorithm~\ref{NW_corner_rule} on subproblem \eqref{staircase_sub}. These indices are given by Corollary~\ref{corollary_get_indices}.
\begin{enumerate}
    \item[\textit{(i)}] For $l \in \{2, \dots, n\}$, $h_l = \min \{ h \in [g]_0 \, | \, l-1 \leq \sum_{k = 0}^{h} \bar{x}_k \}$. 
    \item[\textit{(ii)}] For $h \in \{0, \dots, g-1\}$, $l_h = \max \{ l \in [n] \, | \, l-1 \leq \sum_{k = 0}^{h} \bar{x}_k \} = \min \{ 1 + \sum_{k = 0}^{h} \bar{x}_k, n \}$.
\end{enumerate}
Then, one may set $f(1) = 0$, $f(l) = h_l$ for $l \in \{2, \dots, n\}$ or $f(h) = l_h$ for $h \in \{0, \dots, g-1\}$, respectively.

As mentioned in Section~\ref{sec:theoretical_notes}, it does not matter theoretically which formulas are used to compute $u$ and $v$. However, according to \textit{(i)} and \textit{(ii)} there are obvious computational differences when calculating the indices $h_l$ and $l_h$. Moreover, the sums in the formulas given by Lemma~\ref{lemma_u_v_formulas} and Lemma~\ref{lemma_u_v_formulas_alternative} may differ significantly in their number of terms, which also affects the efficiency of a numerical evaluation. For these reasons, we tested both variants in our numerical study.

Notice that solving the transportation subproblem directly with the northwest-corner rule and retrieving the duals using Algorithm~\ref{dual_algo_backward} or \ref{dual_algo_forward} results in a time-complexity of $\mathcal{O}(n+g)$, while any straightforward implementation of Algorithm~\ref{cut_separation_algo} runs in $\mathcal{O}(ng)$. Nevertheless, as most of the computational time in our experiments is spent solving LP relaxations within the B\&B process rather than performing the separation procedure, this difference has only a minor impact on the overall runtime. To highlight the practical differences between $(\text{mDOMP}_{sB1})$ and $(\text{mDOMP}_{sB2})$, we therefore use Algorithm~\ref{cut_separation_algo} in the numerical experiments.

Lastly, for a proper initialization, one must add a generally valid lower bound on $\theta$ to each model. To obtain such a bound, compute $c^{\min}_i = \min\{c_{ij} \, | \, j \in [n]\}$ and $c^{\max}_i = \max\{c_{ij} \, | \, j \in [n]\}$ for $i \in [n]$ and sort $c^{\min}$ and $c^{\max}$ non-decreasingly. Denoting the sorted vectors by $c^{\min}_{(\cdot)}$ and $c^{\max}_{(\cdot)}$, it is easy to see that the objective function value always satisfies 
\begin{equation*}
    \theta \geq \sum_{i \in [n]: \, \lambda_{i} \geq 0} \lambda_i c^{\min}_{(i)} + \sum_{i \in [n]: \, \lambda_{i} < 0} \lambda_i c^{\max}_{(i)}.
\end{equation*}
This bound is independent of any model and therefore works for both \eqref{mDOMP_sB1} and \eqref{mDOMP_sB2}.

\section{Numerical experiments}\label{sec:numerical_experiments}
In this section, we present the results of our numerical experiments which we conducted to test $(\text{mDOMP}_{sB1})$ and $(\text{mDOMP}_{sB2})$. The required code was written in Python 3.12 and Gurobi 12.0.3 \cite{gurobi} was used as our MILP solver. For an unbiased comparison of the models, we disabled presolve, heuristics, as well as solver generated cuts, and we restricted the number of threads to one, following the settings in \cite{ljubic_benders_2024}. Regarding the termination criteria, a time limit of two hours (7200 seconds) and a MIP Gap of $10^{-6}$ per solve were set. Detailed information on these criteria can be found in Gurobi's reference manual \cite{gurobi}. Next, we implemented the necessary separation procedures (Algorithm~\ref{cut_separation_algo}) as described in Section~\ref{sec:practical_notes} by means of Gurobi's callback mechanism. The cut violation parameter was chosen as $\varepsilon = 0.01$, again complying with the choice in \cite{ljubic_benders_2024}. All computations were performed on AMD EPYC 9454 processors on the bwUniCluster system.

In the remainder of this section, we describe our test instances (Section~\ref{sec:instances}), briefly introduce the benchmark models against which we compare our new formulations (Section~\ref{sec:reference_models}), and discuss our findings (Section~\ref{sec:discussion}).

\subsection{Instances}\label{sec:instances}
We are testing our DOMP formulations on a collection of randomly generated instances with varying sizes and objective functions. Specifically, the number of locations $n$ takes the values $20$, $30$, $50$, $100$, $150$, and $200$, while the number $p$ of locations to be opened is set to $\lfloor \frac{n}{4} \rfloor$, $\lfloor \frac{n}{3} \rfloor$, or $\lfloor \frac{n}{2} \rfloor$.

The considered objective weight vectors $\lambda$ are all non-increasing, as required by Theorems~\ref{transport_domp_formulation} and \ref{transport_domp_formulation_alternative}. They include common obnoxious facility criteria such as the obnoxious $p$-median ($\lambda_1$), $p$-center ($\lambda_2$), $k$-centrum ($\lambda_3$), and range ($\lambda_6$) objectives. Also, the reverse criterion ($\lambda_7$) has appeared earlier in the DOMP literature for numerical testing (see e.g., \cite{dominguez_discrete_2020, ljubic_benders_2024}). Additionally, we consider three further, less studied objectives ($\lambda_4, \lambda_5, \lambda_8$) to obtain a more comprehensive picture of the models' performance. The last type of objective vectors ($\lambda_9$) are obtained by randomly generating $n$-vectors with entries in $[-n, n] \cap \mathbb{Z}$ and sorting them in non-increasing sequence. Despite their limited interpretability, random objectives provide a useful benchmark, as their unstructured nature is expected to yield instances that are difficult to solve. Table~\ref{tab:lambdas} provides a summary of all tested $\lambda$-vectors.

\begin{table}[H]
\centering
\scriptsize
\begin{tabular}{lll}
\hline
\textbf{Notation} & \textbf{$\lambda$-vector} & \textbf{Name} \\ \hline
$\lambda_1$ & $(-1, \dots, -1)$ & obnoxious $p$-median \\ 
$\lambda_2$ & $(0, \dots, 0, -1)$ & obnoxious $p$-center \\ 
$\lambda_3$ & $(0, \dots, 0, \underbrace{-1, \dots, -1}_k)$ & obnoxious $k$-centrum, $k = \lfloor \frac{n}{2} \rfloor$ \\
$\lambda_4$ & $(\underbrace{1, \dots, 1}_k, 0, \dots, 0)$ & $k$-min, $k = \lfloor \frac{n}{2} \rfloor$ \\
$\lambda_5$ & $(\underbrace{1, \dots, 1}_k, -1, \dots, -1)$ & obnoxious $k$-range, $k = \lfloor \frac{n}{2} \rfloor$ \\
$\lambda_6$ & $(1, 0, \dots, 0, -1)$ & obnoxious range \\
$\lambda_7$ & $(n, n-1, \dots, 2, 1)$ & reverse \\
$\lambda_8$ & $(-1, -2, \dots, -(n-1), -n)$ & negative reverse \\
$\lambda_9$ & $\lambda \in \{-n, -n+1, \dots, n-1, n\}^n$ & random non-increasing \\
\hline
\end{tabular}
\caption{Summary of tested $\lambda$-vectors.}
\label{tab:lambdas}
\end{table}

For each value of $n$, five $n \times n$ cost matrices $c$ with random entries in $[100, 1000]$ and two decimal places are generated. Then, every tuple $(n, p, c, \lambda)$ forms an instance, resulting in a total of $6 \cdot 3 \cdot 5 \cdot 9 = 810$ test instances. The complete set of instances can be accessed via \cite{instances2026dataset}.

\subsection{Reference models}\label{sec:reference_models}
We selected two benchmark models from the literature for comparison with our new formulations. Both were proposed by \citet{ljubic_benders_2024}.

The first one, referred to as $(\text{mDOMP}_{rB})$ ($(\text{DOMP}_{r0B})$ in \cite{ljubic_benders_2024}), is a Benders reformulation of the well-known radius formulation introduced by \citet{puerto_new_2008} and \citet{marin_flexible_2009}, a DOMP model that is similar to the staircase formulation \eqref{DOMP_s}. Although $(\text{mDOMP}_{rB})$ has a polynomial number of constraints, it is advantageous to implement the separation procedure proposed in \cite{ljubic_benders_2024} in order to improve the model's efficiency. A comparison with $(\text{mDOMP}_{rB})$ is particularly interesting, since it is a specialized model for non-increasing $\lambda$-vectors, similar to $(\text{mDOMP}_{sB1})$ and $(\text{mDOMP}_{sB2})$.

The second reference model, which we call $(\text{DOMP}_{\theta B})$ ($(\text{DOMP}_{\theta 1})$ in \cite{ljubic_benders_2024}), is a Benders reformulation of a compact DOMP model by \citet{marin_fresh_2020}. In this case, there is an exponential number of constraints, so that separation is no longer optional, but necessary. $(\text{DOMP}_{\theta B})$ turned out to be one of the best-performing DOMP models in \cite{ljubic_benders_2024} for random instances.

For more details on $(\text{mDOMP}_{rB})$ and $(\text{DOMP}_{\theta B})$, we refer the reader to \cite{ljubic_benders_2024}.

\subsection{Discussion}\label{sec:discussion}
In this subsection, we highlight some of the most relevant aspects for model comparison, including numbers of solved instances, runtimes, and optimality gaps. Detailed numerical results can be found in Appendix~\ref{app:tables} (cf.~Tables~\ref{tab:mon_cpu_gap_detailed_table} and \ref{tab:mon_callback_detailed_table}).

To begin with, Figure~\ref{fig:opt_vs_time} summarizes, for each model, the number of instances solved to optimality and the corresponding average runtime, separated by instance size. For small sizes ($n \in \{20, 30\}$), Gurobi solved all instances within the time limit across all models. Unsurprisingly, the number of solved instances decreases as $n$ increases. For medium-sized instances, differences between models become slightly more pronounced. At $n = 50$, $(\text{mDOMP}_{rB})$ solved the fewest instances and therefore reached the time limit most frequently, resulting in the highest average runtime. In contrast, $(\text{DOMP}_{\theta B})$ solved the most instances, achieving the lowest average runtime. For larger instances ($n \in \{100, 150, 200\}$), only objectives $\lambda_4$ and $\lambda_7$ are consistently solved to optimality, while most other instances reach the time limit. As a result, average runtimes become similar across models. In line with the theoretical results, models $(\text{mDOMP}_{sB1})$ and $(\text{mDOMP}_{sB2})$ achieve similar performance across all instance sizes; the remaining differences in optimally solved instances can be attributed to randomness in the B\&B procedure.

\begin{figure}[ht]
    \centering
    \begin{tikzpicture}

\pgfplotsset{
  single horizontal bar legend/.style={
    legend image code/.code={
      \draw[#1] (-0.25cm,-0.08cm) rectangle (0.25cm,0.08cm);
    }
  }
}

\begin{groupplot}[group style={group size=2 by 1}, width=0.4\linewidth,
height=0.6\linewidth,]
\nextgroupplot[
axis line style={mygridline},
x grid style={mygridline},
xlabel={Instances solved},
xmajorgrids,
xmajorticks=true,
xmin=0, xmax=140,
xtick style={draw=none},
xtick={0,20,40,60,80,100,120,140},
xticklabels={
  \(\displaystyle {0}\),
  \(\displaystyle {20}\),
  \(\displaystyle {40}\),
  \(\displaystyle {60}\),
  \(\displaystyle {80}\),
  \(\displaystyle {100}\),
  \(\displaystyle {120}\),
  \(\displaystyle {140}\)
},
y dir=reverse,
y grid style={mygridline},
ylabel={Instance size \(\displaystyle n\)},
ymajorticks=true,
ymin=-0.5, ymax=5.5,
ytick style={draw=none},
ytick={0,1,2,3,4,5},
yticklabels={20,30,50,100,150,200}
]
\draw[draw=black,fill=custom_blue,line width=0.32pt] (axis cs:0,-0.4) rectangle (axis cs:135,-0.2);
\draw[draw=black,fill=custom_blue,line width=0.32pt] (axis cs:0,0.6) rectangle (axis cs:135,0.8);
\draw[draw=black,fill=custom_blue,line width=0.32pt] (axis cs:0,1.6) rectangle (axis cs:115,1.8);
\draw[draw=black,fill=custom_blue,line width=0.32pt] (axis cs:0,2.6) rectangle (axis cs:36,2.8);
\draw[draw=black,fill=custom_blue,line width=0.32pt] (axis cs:0,3.6) rectangle (axis cs:30,3.8);
\draw[draw=black,fill=custom_blue,line width=0.32pt] (axis cs:0,4.6) rectangle (axis cs:30,4.8);
\draw[draw=black,fill=custom_magenta,line width=0.32pt] (axis cs:0,-0.2) rectangle (axis cs:135,2.77555756156289e-17);
\draw[draw=black,fill=custom_magenta,line width=0.32pt] (axis cs:0,0.8) rectangle (axis cs:135,1);
\draw[draw=black,fill=custom_magenta,line width=0.32pt] (axis cs:0,1.8) rectangle (axis cs:87,2);
\draw[draw=black,fill=custom_magenta,line width=0.32pt] (axis cs:0,2.8) rectangle (axis cs:36,3);
\draw[draw=black,fill=custom_magenta,line width=0.32pt] (axis cs:0,3.8) rectangle (axis cs:30,4);
\draw[draw=black,fill=custom_magenta,line width=0.32pt] (axis cs:0,4.8) rectangle (axis cs:30,5);
\draw[draw=black,fill=custom_green,line width=0.32pt] (axis cs:0,-2.77555756156289e-17) rectangle (axis cs:135,0.2);
\draw[draw=black,fill=custom_green,line width=0.32pt] (axis cs:0,1) rectangle (axis cs:135,1.2);
\draw[draw=black,fill=custom_green,line width=0.32pt] (axis cs:0,2) rectangle (axis cs:112,2.2);
\draw[draw=black,fill=custom_green,line width=0.32pt] (axis cs:0,3) rectangle (axis cs:34,3.2);
\draw[draw=black,fill=custom_green,line width=0.32pt] (axis cs:0,4) rectangle (axis cs:30,4.2);
\draw[draw=black,fill=custom_green,line width=0.32pt] (axis cs:0,5) rectangle (axis cs:30,5.2);
\draw[draw=black,fill=custom_orange,line width=0.32pt] (axis cs:0,0.2) rectangle (axis cs:135,0.4);
\draw[draw=black,fill=custom_orange,line width=0.32pt] (axis cs:0,1.2) rectangle (axis cs:135,1.4);
\draw[draw=black,fill=custom_orange,line width=0.32pt] (axis cs:0,2.2) rectangle (axis cs:111,2.4);
\draw[draw=black,fill=custom_orange,line width=0.32pt] (axis cs:0,3.2) rectangle (axis cs:32,3.4);
\draw[draw=black,fill=custom_orange,line width=0.32pt] (axis cs:0,4.2) rectangle (axis cs:30,4.4);
\draw[draw=black,fill=custom_orange,line width=0.32pt] (axis cs:0,5.2) rectangle (axis cs:30,5.4);

\nextgroupplot[
axis line style={mygridline},
log basis x={10},
scaled y ticks=manual:{}{\pgfmathparse{#1}},
tick align=outside,
x grid style={mygridline},
xlabel={Average runtime (s)},
xmajorgrids,
xmajorticks=true,
xmin=0.3, xmax=10329.6945360436,
xmode=log,
xtick style={draw=none},
xtick={0.1,1,10,100,1000,10000,100000,1000000},
xticklabels={
  , 
  \(\displaystyle {10^{0}}\),
  \(\displaystyle {10^{1}}\),
  \(\displaystyle {10^{2}}\),
  \(\displaystyle {10^{3}}\),
  \(\displaystyle {10^{4}}\)
},
y dir=reverse,
y grid style={mygridline},
ymajorticks=false,
ymin=-0.5, ymax=5.5,
yticklabels={},
legend style={
  at={(1.15,0.5)},
  anchor=west,
  draw=none,
  legend cell align=left
}
]

\addlegendimage{empty legend} 
\addlegendentry{\centering Model}

\draw[draw,fill=custom_blue,line width=0.32pt] (axis cs:0.01,0) rectangle (axis cs:0.01,0);
\addlegendimage{single horizontal bar legend,draw,fill=custom_blue,line width=0.32pt}
\addlegendentry{$\text{DOMP}_{\theta B}$}

\draw[draw,fill=custom_magenta,line width=0.32pt] (axis cs:0.01,0) rectangle (axis cs:0.01,0);
\addlegendimage{single horizontal bar legend,draw,fill=custom_magenta,line width=0.32pt}
\addlegendentry{$\text{mDOMP}_{rB}$}

\draw[draw,fill=custom_green,line width=0.32pt] (axis cs:0.01,0) rectangle (axis cs:0.01,0);
\addlegendimage{single horizontal bar legend,draw,fill=custom_green,line width=0.32pt}
\addlegendentry{$\text{mDOMP}_{sB1}$}

\draw[draw,fill=custom_orange,line width=0.32pt] (axis cs:0.01,0) rectangle (axis cs:0.01,0);
\addlegendimage{single horizontal bar legend,draw,fill=custom_orange,line width=0.32pt}
\addlegendentry{$\text{mDOMP}_{sB2}$}

\draw[draw=black,fill=custom_blue,line width=0.32pt] (axis cs:0.01,-0.4) rectangle (axis cs:0.953285506919578,-0.2);
\draw[draw=black,fill=custom_blue,line width=0.32pt] (axis cs:0.01,0.6) rectangle (axis cs:9.75532280604045,0.8);
\draw[draw=black,fill=custom_blue,line width=0.32pt] (axis cs:0.01,1.6) rectangle (axis cs:1565.73143469316,1.8);
\draw[draw=black,fill=custom_blue,line width=0.32pt] (axis cs:0.01,2.6) rectangle (axis cs:5443.26446520664,2.8);
\draw[draw=black,fill=custom_blue,line width=0.32pt] (axis cs:0.01,3.6) rectangle (axis cs:5609.13718444153,3.8);
\draw[draw=black,fill=custom_blue,line width=0.32pt] (axis cs:0.01,4.6) rectangle (axis cs:5624.10036259757,4.8);
\draw[draw=black,fill=custom_magenta,line width=0.32pt] (axis cs:0.01,-0.2) rectangle (axis cs:0.718523467028582,2.77555756156289e-17);
\draw[draw=black,fill=custom_magenta,line width=0.32pt] (axis cs:0.01,0.8) rectangle (axis cs:24.6654875455079,1);
\draw[draw=black,fill=custom_magenta,line width=0.32pt] (axis cs:0.01,1.8) rectangle (axis cs:3295.11323302763,2);
\draw[draw=black,fill=custom_magenta,line width=0.32pt] (axis cs:0.01,2.8) rectangle (axis cs:5559.74594836058,3);
\draw[draw=black,fill=custom_magenta,line width=0.32pt] (axis cs:0.01,3.8) rectangle (axis cs:5645.96067439185,4);
\draw[draw=black,fill=custom_magenta,line width=0.32pt] (axis cs:0.01,4.8) rectangle (axis cs:5744.40348346675,5);
\draw[draw=black,fill=custom_green,line width=0.32pt] (axis cs:0.01,-2.77555756156289e-17) rectangle (axis cs:0.68810948089317,0.2);
\draw[draw=black,fill=custom_green,line width=0.32pt] (axis cs:0.01,1) rectangle (axis cs:22.2943813023744,1.2);
\draw[draw=black,fill=custom_green,line width=0.32pt] (axis cs:0.01,2) rectangle (axis cs:1627.78863256066,2.2);
\draw[draw=black,fill=custom_green,line width=0.32pt] (axis cs:0.01,3) rectangle (axis cs:5526.72854311025,3.2);
\draw[draw=black,fill=custom_green,line width=0.32pt] (axis cs:0.01,4) rectangle (axis cs:5606.5134422002,4.2);
\draw[draw=black,fill=custom_green,line width=0.32pt] (axis cs:0.01,5) rectangle (axis cs:5609.90693299152,5.2);
\draw[draw=black,fill=custom_orange,line width=0.32pt] (axis cs:0.01,0.2) rectangle (axis cs:0.52874712060999,0.4);
\draw[draw=black,fill=custom_orange,line width=0.32pt] (axis cs:0.01,1.2) rectangle (axis cs:18.638126761825,1.4);
\draw[draw=black,fill=custom_orange,line width=0.32pt] (axis cs:0.01,2.2) rectangle (axis cs:1642.5054608557,2.4);
\draw[draw=black,fill=custom_orange,line width=0.32pt] (axis cs:0.01,3.2) rectangle (axis cs:5539.60036303202,3.4);
\draw[draw=black,fill=custom_orange,line width=0.32pt] (axis cs:0.01,4.2) rectangle (axis cs:5602.76577798172,4.4);
\draw[draw=black,fill=custom_orange,line width=0.32pt] (axis cs:0.01,5.2) rectangle (axis cs:5603.42611074977,5.4);

\end{groupplot}

\end{tikzpicture}
    \caption{Number of instances solved to optimality vs. average runtime per instance size $n$.}
    \label{fig:opt_vs_time}
\end{figure}

Next, we analyze the optimality gaps at termination to assess solution quality in more detail, especially for non-optimal runs. The gap value given by Gurobi is the difference between the objective value $\overline{z}$ of the best incumbent solution and the best found lower bound $\underline{z}$ relative to the absolute value of $\overline{z}$ (cf.~\cite{gurobi}). Figure~\ref{fig:cumulative_gaps} shows, for each model, the cumulative fraction of instances (y-axis) achieving a given gap value (x-axis).

\begin{figure}[ht]
    \centering
    \input{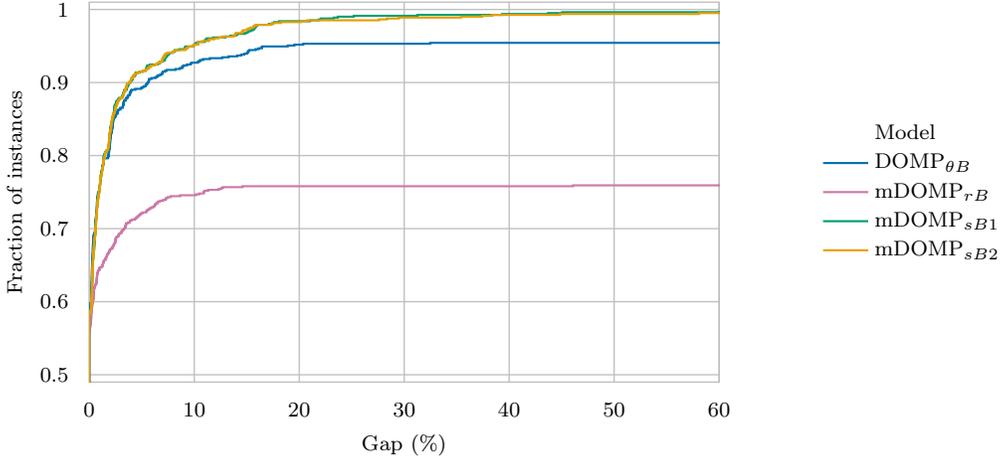}
    \caption{Cumulative distribution of optimality gaps at termination.}
    \label{fig:cumulative_gaps}
\end{figure}

The plot clearly indicates that $(\text{mDOMP}_{rB})$ is the least robust model in our experiments, in the sense that it achieves reasonably low gaps of 20\% or less in only 76\% of the cases. While $(\text{DOMP}_{\theta B})$ produces good solutions in 95\% of instances, it is outperformed by $(\text{mDOMP}_{sB1})$ and $(\text{mDOMP}_{sB2})$, which achieve gaps below 20\% in 98\% of runs. Moreover, these two models find solutions within a 40\% gap in almost all cases, whereas $(\text{mDOMP}_{rB})$ fails to find any solution in 24\% of runs and $(\text{DOMP}_{\theta B})$ fails in roughly 4.5\%. As before, the minor differences between $(\text{mDOMP}_{sB1})$ and $(\text{mDOMP}_{sB2})$ are explainable by random behavior in the B\&B process.

Finally, we have a closer look at the efficiency of the separation procedures. Particularly, we report in Figure~\ref{fig:callback}, for each model, the average time spent in callbacks and the average number of added lazy constraints per instance.

\begin{figure}[ht]
    \centering
    \begin{tikzpicture}

\begin{groupplot}
[
    group style={
        group size=2 by 1,
        horizontal sep=0.12\linewidth, 
    },
    height=0.4\linewidth, width=0.44\linewidth,
]
\nextgroupplot[
axis line style={mygridline},
log basis y={10},
tick align=outside,
title={Average callback times per instance},
unbounded coords=jump,
xmajorticks=true,
xmin=-0.5, xmax=3.5,
xtick pos=lower,
xtick={0,1,2,3},
xtick style={draw=none},
xticklabel style={rotate=20.0},
xticklabels={
  \(\displaystyle \text{mDOMP}_{sB2}\),
  \(\displaystyle \text{DOMP}_{\theta B}\),
  \(\displaystyle \text{mDOMP}_{sB1}\),
  \(\displaystyle \text{mDOMP}_{rB}\)
},
ylabel={Time (s)},
ymajorgrids,
y grid style={mygridline},
ymin=10, ymax=1000,
ymode=log,
ytick pos=left,
ytick style={mygridline},
yticklabels={
  \(\displaystyle {10^{1}}\),
  \(\displaystyle {10^{2}}\),
  \(\displaystyle {10^{3}}\)
}
]
\draw[draw=black,fill=custom_orange,line width=0.32pt] (axis cs:-0.4,0) rectangle (axis cs:0.4,11.4540264195004);
\draw[draw=black,fill=custom_blue,line width=0.32pt] (axis cs:0.6,0) rectangle (axis cs:1.4,23.5257221415484);
\draw[draw=black,fill=custom_green,line width=0.32pt] (axis cs:1.6,0) rectangle (axis cs:2.4,91.0327254995853);
\draw[draw=black,fill=custom_magenta,line width=0.32pt] (axis cs:2.6,0) rectangle (axis cs:3.4,139.241734553036);

\nextgroupplot[
axis line style={mygridline},
log basis y={10},
tick align=outside,
title={Average lazy constraints per instance},
unbounded coords=jump,
xmajorticks=true,
xmin=-0.5, xmax=3.5,
xtick pos=lower,
xtick={0,1,2,3},
xtick style={draw=none},
xticklabel style={rotate=20.0},
xticklabels={
  \(\displaystyle \text{mDOMP}_{sB2}\),
  \(\displaystyle \text{DOMP}_{\theta B}\),
  \(\displaystyle \text{mDOMP}_{sB1}\),
  \(\displaystyle \text{mDOMP}_{rB}\)
},
ylabel={Number of constraints},
ymajorgrids,
y grid style={mygridline},
ymin=100, ymax=10000,
ymode=log,
ytick pos=left,
ytick style={mygridline},
yticklabels={
  \(\displaystyle {10^{2}}\),
  \(\displaystyle {10^{3}}\),
  \(\displaystyle {10^{4}}\)
}
]
\draw[draw=black,fill=custom_green,line width=0.32pt] (axis cs:-0.4,0) rectangle (axis cs:0.4,162.87037037037);
\draw[draw=black,fill=custom_orange,line width=0.32pt] (axis cs:0.6,0) rectangle (axis cs:1.4,177.116049382716);
\draw[draw=black,fill=custom_blue,line width=0.32pt] (axis cs:1.6,0) rectangle (axis cs:2.4,261.354320987654);
\draw[draw=black,fill=custom_magenta,line width=0.32pt] (axis cs:2.6,0) rectangle (axis cs:3.4,9239.56419753086);
\end{groupplot}

\end{tikzpicture}
    \caption{Average callback times and numbers of added lazy constraints across all tested instances.}
    \label{fig:callback}
\end{figure}
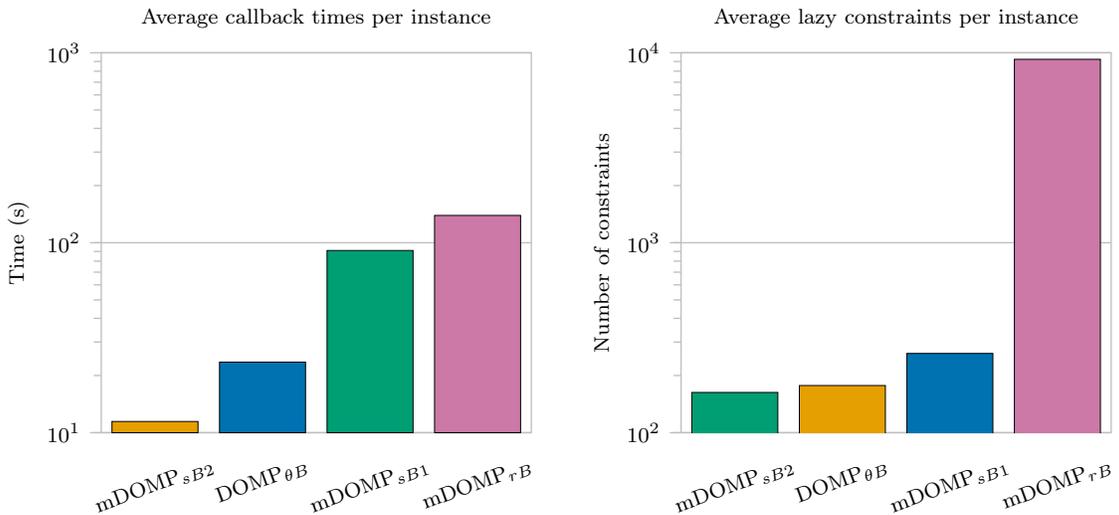

Since $(\text{mDOMP}_{rB})$ adds by far the largest number of lazy constraints, it also spends, on average, more time in the callback than the other models. $(\text{DOMP}_{\theta B})$ generates roughly 1.5 times as many lazy constraints as $(\text{mDOMP}_{sB1})$ and $(\text{mDOMP}_{sB2})$, yet its average callback time lies between the two, lower than for $(\text{mDOMP}_{sB1})$ but higher than for $(\text{mDOMP}_{sB2})$. Notably, $(\text{mDOMP}_{sB1})$ and $(\text{mDOMP}_{sB2})$ achieve the highest solution quality (cf. Figure~\ref{fig:cumulative_gaps}) while adding fewer lazy constraints, indicating that their cuts are stronger and more effective. In contrast, the large number of comparatively weaker cuts generated by $(\text{mDOMP}_{rB})$ contributes to its inferior overall performance.

We finally comment on the observed differences in callback times between $(\text{mDOMP}_{sB1})$ and $(\text{mDOMP}_{sB2})$, which stem from the distinct implementations of Algorithm~\ref{cut_separation_algo}. For $(\text{mDOMP}_{sB1})$, the computational bottleneck is the evaluation of \eqref{u_f_v_f_formulas} to determine the $v^f$-variables. This requires a for-loop over all $h \in [g]$ with an inner loop over $k \in [n]$ while $f(k) < h$. In contrast, $(\text{mDOMP}_{sB2})$ computes the $u^f$-values via \eqref{u_f_v_f_formulas_alternative}, using a for-loop over $l \in [n]$ and an inner loop over $k \in [g-1]_0$ while $f(k) < l$. In the worst case, both implementations perform $ng$ inner iterations. As the cost matrices are randomly drawn from a sufficiently large set of values, we typically have $g \gg n$. In $(\text{mDOMP}_{sB1})$, the outer loop iterates over all $g$ indices independently of the data, leading to consistently high iteration counts. On the other hand, $(\text{mDOMP}_{sB2})$ places the larger index set inside the while loop, where the termination criterion usually truncates the iterations early. Consequently, the effective number of iterations is substantially smaller for $(\text{mDOMP}_{sB2})$, explaining its superior practical performance. Note that under different structural conditions, in particular when $g < n$, this effect may reverse.

\section{Conclusion and further research}
\label{sec:conclusion}
In this paper, we studied the TP under a Monge cost structure and showed how the optimal solution obtained by the northwest-corner rule can be translated into explicit solution formulas for the corresponding dual problem. Because the TP frequently arises, often as a substructure of more complex optimization models, such dual formulas can enable the application of advanced optimization techniques and provide deeper insight into the structure of these problems. To illustrate this potential, we employed the derived formulas to construct two Benders reformulations, $(\text{mDOMP}_{sB1})$ and $(\text{mDOMP}_{sB2})$, of a MILP formulation for the DOMP with a non-increasing objective weight vector.

Both formulations demonstrate strong performance compared to two state-of-the-art benchmark models from the literature. A particular strength lies in their robustness, as they consistently produce high-quality solutions within the prescribed time limit, even for challenging instances. Although our new models describe the same polytope, the numerical experiments reveal that $(\text{mDOMP}_{sB1})$ spends, on average, significantly more time in the separation callbacks than $(\text{mDOMP}_{sB2})$ in the tested setting. This highlights that one of the two models may be preferred over the other in practice.

A promising direction for future research is the application of the derived formulas to other problems, some of which were mentioned in the introduction, where the TP under Monge costs arises naturally. Another avenue is the investigation of a possible generalization of the optimal dual formulas to the multidimensional TP with Monge costs, for which an optimal greedy algorithm is also known (cf. Theorem 6.5 in \cite{BurkardKlinzRudolf1996}). Finally, ongoing research aims to extend the models $(\text{mDOMP}_{sB1})$ and $(\text{mDOMP}_{sB2})$, currently restricted to non-increasing objective vectors $\lambda$, to arbitrary $\lambda$.

\section*{Acknowledgements}
This work was supported by the Deutsche Forschungsgemeinschaft (DFG, German Research Foundation) [project number 541767835] and the Agencia Estatal de Investigación (AEI, MICIU/AEI/10.13039/50110001 1033) [project numbers PCI2024-155024-2, PID2020-114594GB-C21, PID2024-156594NB-C21, CEX2024-001517-M].

The numerical experiments were performed on the computational resource bwUniCluster funded by the Ministry of Science, Research and the Arts Baden-Württemberg and the Universities of the State of Baden-Württemberg, Germany, within the framework program bwHPC.

\section*{Declaration of interest}
The authors declare that they have no known competing financial interests or personal relationships that could have appeared to influence the work reported in this paper. The funding agencies had no role in the design of the study; in the collection, analysis, or interpretation of data; in the writing of the manuscript; or in the decision to submit the article for publication.

\section*{Declaration of generative AI in the manuscript preparation process}
During the preparation of this work, the authors used ChatGPT (OpenAI, GPT-5 architecture) to improve the readability and language of the manuscript and to assist in drafting selected code snippets. All AI-generated text and code were carefully reviewed, validated, and, where necessary, revised and adapted by the authors. The authors take full responsibility for the content of the published article.

\bibliography{references}

\begin{thebibliography}{}

\bibitem[Bazaraa et~al., 2010]{bazaraa2011linear}
Bazaraa, M.~S., Jarvis, J.~J., \& Sherali, H.~D. (2010).
\newblock {\em Linear {P}rogramming and {N}etwork {F}lows} (4 ed.).
\newblock John Wiley \& Sons.

\bibitem[Bigler, 2024]{bigler_matheuristic_2024}
Bigler, T. (2024).
\newblock A matheuristic for locating obnoxious facilities.
\newblock {\em Computers \& Operations Research}, 166.
\newblock \url{https://doi.org/10.1016/j.cor.2024.106602}

\bibitem[Boland et~al., 2006]{boland_exact_2006}
Boland, N., Domínguez-Marín, P., Nickel, S., \& Puerto, J. (2006).
\newblock Exact procedures for solving the discrete ordered median problem.
\newblock {\em Computers \& Operations Research}, 33(11), 3270--3300.
\newblock \url{https://doi.org/10.1016/j.cor.2005.03.025}

\bibitem[Burkard et~al., 1996]{BurkardKlinzRudolf1996}
Burkard, R.~E., Klinz, B., \& Rudolf, R. (1996).
\newblock Perspectives of monge properties in optimization.
\newblock {\em Discrete Applied Mathematics}, 70(2), 95--161.
\newblock \url{https://doi.org/10.1016/0166-218X(95)00103-X}

\bibitem[Cherkesly et~al., 2025]{cherkesly_ranking_2025}
Cherkesly, M., Contardo, C., \& Gruson, M. (2025).
\newblock Ranking {Decomposition} for the {Discrete} {Ordered} {Median}
  {Problem}.
\newblock {\em INFORMS Journal on Computing}, 37(2), 230--248.
\newblock \url{https://doi.org/10.1287/ijoc.2023.0059}

\bibitem[Chiang \& Lin, 2017]{chiang_obnox_2017}
Chiang, Y.-I. \& Lin, C.-C. (2017).
\newblock Compact {Model} for the {Obnoxious} p-{Median} {Problem}.
\newblock {\em American Journal of Operations Research}, 7(6), 348--355.
\newblock \url{https://doi.org/10.4236/ajor.2017.76026}

\bibitem[Church \& Drezner, 2022]{church_review_2022}
Church, R.~L. \& Drezner, Z. (2022).
\newblock Review of obnoxious facilities location problems.
\newblock {\em Computers \& Operations Research}, 138.
\newblock \url{https://doi.org/10.1016/j.cor.2021.105468}

\bibitem[Colmenar et~al., 2016]{colmenar_obnox_grasp_2016}
Colmenar, J.~M., Greistorfer, P., Martí, R., \& Duarte, A. (2016).
\newblock Advanced {Greedy} {Randomized} {Adaptive} {Search} {Procedure} for
  the {Obnoxious} \textit{p}-{Median} problem.
\newblock {\em European Journal of Operational Research}, 252(2), 432--442.
\newblock \url{https://doi.org/10.1016/j.ejor.2016.01.047}

\bibitem[Contreras \& Fern{\'{a}}ndez, 2014]{ContrerasFernandez2014}
Contreras, I. \& Fern{\'{a}}ndez, E. (2014).
\newblock Hub {L}ocation as the {M}inimization of a {S}upermodular {S}et
  {F}unction.
\newblock {\em Operations Research}, 62(3), 557--570.
\newblock \url{https://doi.org/10.1287/opre.2014.1263}

\bibitem[Dahl, 2008]{dahl_transportation_2008}
Dahl, G. (2008).
\newblock Transportation matrices with staircase patterns and majorization.
\newblock {\em Linear Algebra and its Applications}, 429(7), 1840--1850.
\newblock \url{https://doi.org/10.1016/j.laa.2008.05.019}

\bibitem[Dantzig, 1951]{dantzig1951transportation}
Dantzig, G.~B. (1951).
\newblock Application of the {S}implex {M}ethod to a {T}ransportation
  {P}roblem.
\newblock {\em Activity Analysis of Production and Allocation}, 359--373. J.
  Wiley, New York.
\newblock Cowles Commission Monograph No.\ 13.

\bibitem[Deleplanque et~al., 2020]{deleplanque_branch-price-and-cut_2020}
Deleplanque, S., Labbé, M., Ponce, D., \& Puerto, J. (2020).
\newblock A {Branch}-{Price}-and-{Cut} {Procedure} for the {Discrete} {Ordered}
  {Median} {Problem}.
\newblock {\em INFORMS Journal on Computing}, 32(3), 582--599.
\newblock \url{https://doi.org/10.1287/ijoc.2019.0915}

\bibitem[Domínguez \& Marín, 2020]{dominguez_discrete_2020}
Domínguez, E. \& Marín, A. (2020).
\newblock Discrete ordered median problem with induced order.
\newblock {\em TOP}, 28(3), 793--813.
\newblock \url{https://doi.org/10.1007/s11750-020-00570-1}

\bibitem[Domínguez-Marín, 2003]{DomnguezMarin2003}
Domínguez-Marín, P. (2003).
\newblock {\em The {Discrete} {Ordered} {Median} {Problem}: {Models} and
  {Solution} {Methods}}.
\newblock Springer US.
\newblock \url{https://doi.org/10.1007/978-1-4419-8511-8}

\bibitem[Domínguez-Marín et~al., 2005]{dominguez-marin_heuristic_2005}
Domínguez-Marín, P., Nickel, S., Hansen, P., \& Mladenović, N. (2005).
\newblock Heuristic {Procedures} for {Solving} the {Discrete} {Ordered}
  {Median} {Problem}.
\newblock {\em Annals of Operations Research}, 136(1), 145--173.
\newblock \url{https://doi.org/10.1007/s10479-005-2043-3}

\bibitem[Espejo et~al., 2023]{espejo_single_alloc_hub}
Espejo, I., Marín, A., Muñoz-Ocaña, J.~M., \& Rodríguez-Chía, A.~M.
  (2023).
\newblock A new formulation and branch-and-cut method for single-allocation hub
  location problems.
\newblock {\em Computers \& Operations Research}, 155, 106241.
\newblock \url{https://doi.org/10.1016/j.cor.2023.106241}

\bibitem[Fischetti et~al., 2016]{fischetti_benders_2016}
Fischetti, M., Ljubić, I., \& Sinnl, M. (2016).
\newblock Benders decomposition without separability: {A} computational study
  for capacitated facility location problems.
\newblock {\em European Journal of Operational Research}, 253(3), 557--569.
\newblock \url{https://doi.org/10.1016/j.ejor.2016.03.002}

\bibitem[Ford \& Fulkerson, 1956]{ford_solving_1956}
Ford, L.~R. \& Fulkerson, D.~R. (1956).
\newblock Solving the {Transportation} {Problem}.
\newblock {\em Management Science}, 3(1), 24--32.
\newblock \url{https://doi.org/10.1287/mnsc.3.1.24}

\bibitem[{Gurobi Optimization, LLC}, 2026]{gurobi}
{Gurobi Optimization, LLC} (2026).
\newblock {\em {Gurobi Optimizer Reference Manual}}.
\newblock Accessed: January 11, 2026.
\newblock \url{https://www.gurobi.com}

\bibitem[Hardy et~al., 1934]{inequalities}
Hardy, G., Littlewood, J., \& P{\'o}lya, G. (1934).
\newblock {\em Inequalities}.
\newblock Cambridge University Press.

\bibitem[Herrán et~al., 2020]{herran_obnox_vns_2020}
Herrán, A., Colmenar, J.~M., Martí, R., \& Duarte, A. (2020).
\newblock A parallel variable neighborhood search approach for the obnoxious
  p-median problem.
\newblock {\em International Transactions in Operational Research}, 27(1),
  336--360.
\newblock \url{https://doi.org/10.1111/itor.12510}

\bibitem[Hillbrecht, 2025]{hillbrecht2025bilevel}
Hillbrecht, S. (2025).
\newblock Bilevel {O}ptimization of the {K}antorovich {P}roblem and its
  {Q}uadratic {R}egularization {P}art {I}{I}{I}: {T}he {F}inite-{D}imensional
  {C}ase.
\newblock {\em arXiv preprint}.
\newblock \url{https://doi.org/10.48550/arXiv.2406.08992}

\bibitem[Hitchcock, 1941]{hitchcock_distribution_1941}
Hitchcock, F.~L. (1941).
\newblock The {Distribution} of a {Product} from {Several} {Sources} to
  {Numerous} {Localities}.
\newblock {\em Journal of Mathematics and Physics}, 20(1-4), 224--230.
\newblock \url{https://doi.org/10.1002/sapm1941201224}

\bibitem[Hoffman, 1963]{Hoffman1963SimpleLP}
Hoffman, A.~J. (1963).
\newblock On {S}imple {L}inear {P}rogramming {P}roblems.
\newblock {\em Proceedings of Symposia in Pure Mathematics}, volume~7,
  317--327.

\bibitem[Kacher \& Singh, 2021]{kacher_comprehensive_2021}
Kacher, Y. \& Singh, P. (2021).
\newblock A {Comprehensive} {Literature} {Review} on {Transportation}
  {Problems}.
\newblock {\em International Journal of Applied and Computational Mathematics},
  7(5).
\newblock \url{https://doi.org/10.1007/s40819-021-01134-y}

\bibitem[Kalczynski \& Drezner, 2026]{kalczynski_obnoxious_2026}
Kalczynski, P. \& Drezner, Z. (2026).
\newblock The obnoxious facilities p -center problem with forbidden regions.
\newblock {\em Computers \& Operations Research}, 185.
\newblock \url{https://doi.org/10.1016/j.cor.2025.107297}

\bibitem[Koopmans, 1949]{koopmans_optimum_1949}
Koopmans, T.~C. (1949).
\newblock Optimum {Utilization} of the {Transportation} {System}.
\newblock {\em Econometrica}, 17, 136--146.
\newblock \url{https://doi.org/10.2307/1907301}

\bibitem[Labbé et~al., 2017]{labbe_comparative_2017}
Labbé, M., Ponce, D., \& Puerto, J. (2017).
\newblock A comparative study of formulations and solution methods for the
  discrete ordered p-median problem.
\newblock {\em Computers \& Operations Research}, 78, 230--242.
\newblock \url{https://doi.org/10.1016/j.cor.2016.06.004}

\bibitem[Ljubić et~al., 2024]{ljubic_benders_2024}
Ljubić, I., Pozo, M.~A., Puerto, J., \& Torrejon, A. (2024).
\newblock Benders decomposition for the discrete ordered median problem.
\newblock {\em European Journal of Operational Research}, 317(3), 858--874.
\newblock \url{https://doi.org/10.1016/j.ejor.2024.04.030}

\bibitem[Marín et~al., 2009]{marin_flexible_2009}
Marín, A., Nickel, S., Puerto, J., \& Velten, S. (2009).
\newblock A flexible model and efficient solution strategies for discrete
  location problems.
\newblock {\em Discrete Applied Mathematics}, 157(5), 1128--1145.
\newblock \url{https://doi.org/10.1016/j.dam.2008.03.013}

\bibitem[Marín et~al., 2020]{marin_fresh_2020}
Marín, A., Ponce, D., \& Puerto, J. (2020).
\newblock A fresh view on the {Discrete} {Ordered} {Median} {Problem} based on
  partial monotonicity.
\newblock {\em European Journal of Operational Research}, 286(3), 839--848.
\newblock \url{https://doi.org/10.1016/j.ejor.2020.04.023}

\bibitem[Monge, 1781]{monge1781memoire}
Monge, G. (1781).
\newblock {\em M{\'e}moire sur la th{\'e}orie des d{\'e}blais et des remblais}.
\newblock Imprimerie royale.

\bibitem[Nickel, 2001]{nickel_discrete_2001}
Nickel, S. (2001).
\newblock Discrete {Ordered} {Weber} {Problems}.
\newblock {\em Operations {Research} {Proceedings}}, 71--76.
\newblock \url{https://doi.org/10.1007/978-3-642-56656-1_12}

\bibitem[Nickel \& Puerto, 2005]{nickel_puerto_2005}
Nickel, S. \& Puerto, J. (2005).
\newblock {\em Location {Theory}}.
\newblock Springer-Verlag.
\newblock \url{https://doi.org/10.1007/3-540-27640-8}

\bibitem[Nickel et~al., 2026]{instances2026dataset}
Nickel, S., Puerto, J., Ramoser, S., \& Torrejon, A. (2026).
\newblock {\em Instance data for '{R}evisiting transportation problems under
  {M}onge costs with applications to location problems'}.
\newblock Mendeley Data.
\newblock \url{https://doi.org/10.17632/kk8ms9rsby.1}

\bibitem[Nickel \& Velten, 2017]{nickel_velten_flexible_2017}
Nickel, S. \& Velten, S. (2017).
\newblock Optimization problems with flexible objectives: {A} general modeling
  approach and applications.
\newblock {\em European Journal of Operational Research}, 258(1), 79--88.
\newblock \url{https://doi.org/10.1016/j.ejor.2016.08.045}

\bibitem[Pozo et~al., 2021]{Pozo2021}
Pozo, M.~A., Puerto, J., \& Rodr{\'\i}guez-Ch{\'\i}a, A.~M. (2021).
\newblock The ordered median tree of hubs location problem.
\newblock {\em TOP}, 29(1), 78--105.
\newblock \url{https://doi.org/10.1007/s11750-020-00572-z}

\bibitem[Pozo et~al., 2024]{Pozo2024}
Pozo, M.~A., Puerto, J., \& Torrejon, A. (2024).
\newblock The {O}rdered {M}edian {T}ree {L}ocation {P}roblem.
\newblock {\em Computers \& Operations Research}, 169.
\newblock \url{https://doi.org/10.1016/j.cor.2024.106746}

\bibitem[Puerto, 2008]{puerto_new_2008}
Puerto, J. (2008).
\newblock A {New} {Formulation} of the {Capacitated} {Discrete} {Ordered}
  {Median} {Problems} with \{0, 1\}-{Assignment}.
\newblock {\em Operations {Research} {Proceedings} 2007}, 165--170.
\newblock \url{https://doi.org/10.1007/978-3-540-77903-2_26}

\bibitem[Puerto et~al., 2017]{puerto_revisiting_2017}
Puerto, J., Rodríguez-Chía, A.~M., \& Tamir, A. (2017).
\newblock Revisiting k-sum optimization.
\newblock {\em Mathematical Programming}, 165(2), 579--604.
\newblock \url{https://doi.org/10.1007/s10107-016-1096-1}

\bibitem[Puerto \& Tamir, 2005]{PuertoTamir2005}
Puerto, J. \& Tamir, A. (2005).
\newblock Locating tree-shaped facilities using the ordered median objective.
\newblock {\em Math. Program.}, 102(2), 313–338.
\newblock \url{https://doi.org/10.1007/s10107-004-0547-2}

\bibitem[Queyranne et~al., 1998]{queyranne_general_1998}
Queyranne, M., Spieksma, F., \& Tardella, F. (1998).
\newblock A {General} {Class} of {Greedily} {Solvable} {Linear} {Programs}.
\newblock {\em Mathematics of Operations Research}, 23(4), 892--908.
\newblock \url{https://doi.org/10.1287/moor.23.4.892}

\bibitem[Singh et~al., 2008]{singh_bilevel_TP}
Singh, S., Khandelwal, A., \& Puri, M. (2008).
\newblock Bilevel time minimizing transportation problem.
\newblock {\em Discrete Optimization}, 5(4), 714--723.
\newblock \url{https://doi.org/10.1016/j.disopt.2008.04.004}

\bibitem[Stanley, 1997]{stanley1997enumerative}
Stanley, R. (1997).
\newblock {\em Enumerative Combinatorics: Volume 1}.
\newblock Cambridge University Press.

\bibitem[Yemelichev et~al., 1984]{yemelichev1984polytopes}
Yemelichev, V.~A., Kovalev, M.~M., \& Kravtsov, M.~K. (1984).
\newblock {\em Polytopes, Graphs and Optimisation}.
\newblock Cambridge University Press.

\bibitem[Zhang \& Xie, 2025]{zhang_review_2025}
Zhang, N. \& Xie, F. (2025).
\newblock A {Review} of {Research} on the {Transportation} {Problem}.
\newblock {\em Open Journal of Applied Sciences}, 15(05), 1168--1177.
\newblock \url{https://doi.org/10.4236/ojapps.2025.155081}

\end{thebibliography}

\appendix
\section{Tables}\label{app:tables}
\begin{center}
\footnotesize
\setlength{\tabcolsep}{8pt}

\end{center}

\end{document}